\newtheorem{theorem}{Theorem}[section]
\newtheorem{corollary}{Corollary}[section]
\newtheorem{lemma}{Lemma}[section]
\newtheorem{proposition}{Proposition}[section]
\newenvironment{proof}[1][Proof]{\textbf{#1.} }{\ \rule{0.5em}{0.5em}}
\newcommand{\td}{\buildrel{D}\over{=}}
\newcommand{\X}{{\cal X}_o}
\newcommand{\e}{\epsilon}
\newcommand{\R}{{\bar R}}
\def\P{{\bf P}}
\newcommand{\V}{\tilde{V}}
\numberwithin{equation}{section}
\def\td{\buildrel{D}\over{=}}
\def\X{{\cal X}_o}
\def\e{\epsilon}
\begin{document}

\title{Asymptotic risks of Viterbi segmentation}

\author{Kristi Kuljus, Jüri Lember\thanks{Estonian science foundation grant no
7553}}

\maketitle

{\small \vspace{0cm} \hbox{\hspace{1.1 cm}\vbox{\noindent\hsize=8cm Swedish University of Agricultural Sciences\\
Centre of Biostochastics\\
901 83 Ume{\aa}, Sweden\\
E-mail: Kristi.Kuljus@sekon.slu.se}\vbox{\noindent \hsize=8cm  University of Tartu\\
Institute of Mathematical Statistics \\
Liivi 2-513 50409, Tartu,
Estonia\\
E-mail: jyril@ut.ee}} \vskip 1cm \hfill} \vspace{0.5cm}\noindent
\abstract{\noindent We consider the maximum likelihood (Viterbi)
alignment of a hidden Markov model (HMM). In an HMM, the underlying
Markov chain is usually hidden and the Viterbi alignment is often
used as the estimate of it. This approach will be referred to as the
Viterbi segmentation. The goodness of the Viterbi segmentation can
be measured by several risks. In this paper, we prove the existence
of asymptotic risks. Being independent of data, the asymptotic risks
can be  considered as the characteristics of the model that
illustrate the long-run behavior of the Viterbi segmentation.}

\vskip 1\baselineskip\noindent {\bf Keywords:} hidden Markov model,
Viterbi alignment, segmentation.

\section{Introduction}\label{Intro}
\noindent The present paper deals with asymptotics of the Viterbi
segmentation. Before we can present main results, we introduce the
segmentation problem and different risks for measuring goodness of
segmentations.
\subsection{Notation}
\noindent Let $Y=\{Y_t \}_{t=-\infty}^{\infty}$ be a double-sided
stationary MC with states $S=\{1,\ldots,|S|\}$ and irreducible
aperiodic transition matrix $\big(P(i,j)\big)$. Let
$X=\{X_t\}_{t=-\infty}^{\infty}$ be a double-sided process such
that: 1) given $\{Y_t\}$  the random variables $\{X_t\}$ are
conditionally independent; 2) the distribution of $X_j$ depends on
$\{Y_t \}$ only through $Y_j$. The process $X$ is sometimes called a
{\it hidden Markov process} (HMP) and the pair $(Y,X)$ is referred
to as a {\it hidden Markov model} (HMM). The name is motivated by
the assumption that the process $Y$, which is sometimes called the
{\it regime}, is non-observable. The distributions $P_s:=\P(X_1\in
\cdot|Y_1=s)$ are called {\it emission distributions}. We shall
assume that the emission distributions are defined on a measurable
space  $({\cal X},{\cal B})$, where ${\cal X}$ is usually
$\mathbb{R}^d$ and ${\cal B}$ is the Borel $\sigma$-algebra. Without
loss of generality we shall assume that the measures $P_s$ have
densities $f_s$ with respect to some reference measure $\mu$. Our
notation differs from the one used in the HMM literature, where
usually $X$ stands for the regime and $Y$ for the observations.
Since our study is mainly motivated by statistical learning, we
would like to be consistent with the notation used there and keep
$X$ for observations and $Y$ for latent variables.
\\
HMMs are widely used in various fields of applications, including
speech recognition \cite{rabiner, jelinek}, bioinformatics
\cite{koski, BioHMM2}, language processing  \cite{ochney}, image
analysis \cite{gray2} and many others. For general overview about
HMMs, we refer to \cite{HMMraamat} and \cite{HMP}.
\\\\
Given a set ${\mathcal A}$ and integers $m$ and $n$, $m<n$, we shall
denote any $(n-m+1)$-dimensional vector with all the components in
${\mathcal A}$ by $a_m^n:=(a_m,\ldots,a_n)$. When $m=1$, it will be
often dropped from the notation and we write $a^n\in {\mathcal
A}^n$.
\subsection{Segmentation and risks}
\noindent The {\it segmentation} problem consists of estimating the
unobserved realization of the underlying Markov chain $Y_1,\ldots,
Y_n$ given $n$ observations $x^n=(x_1,\ldots ,x_n)$ from a hidden
Markov model. Formally, we are looking for a mapping $g:{\cal X}^n
\to S^n$ called a {\it classifier}, that maps every sequence of
observations into a state sequence (see \cite{seg} for details). For
finding the best $g$, it is natural to set to every state sequence
$s^n\in S^n$ into correspondence a measure of goodness of $s^n$,
referred to as the {\it risk of $s^n$}. Let us denote the risk of
$s^n$ for a given $x^n$ by $R(s^n|x^n)$. The solution of the
segmentation problem is then a state sequence with minimum risk. In
the framework of pattern recognition theory the risk is specified
via a {\it loss function} $L: S^n\times S^n \to [0,\infty],$ where
$L(a^n,b^n)$ measures the loss when the actual state sequence is
$a^n$ and the estimated sequence is $b^n$. For any state sequence
$s^n\in S^n$ the  risk is then
\[\label{risk}
R(s^n|x^n):=E[L(Y^n,s^n)|X^n=x^n]=\sum_{a^n\in
S^n}L(a^n,s^n)\P(Y^n=a^n|X^n=x^n).
\]
One common loss function is the so-called {\it symmetric loss}
$L_{\infty}$ defined as
\[\label{symm-n}
L_{\infty}(a^n,b^n)=\left\{
               \begin{array}{ll}
                 1, & \hbox{if $a^n\ne b^n$;} \\
                 0, & \hbox{if $a^n=b^n$.}
               \end{array}
             \right.
\]
We shall denote the corresponding risk by $R_{\infty}$. With this
loss, $R_{\infty}(s^n|x^n)=\P(Y^n\ne s^n|X^n=x^n)$, thus the
minimizer of $R_{\infty}(\cdot|x^n)$ is a sequence with maximum
posterior probability, called the {\it Viterbi alignment}. The name
is inherited from the dynamic programming algorithm (Viterbi
algorithm) used for finding it. Let $v$ stand for the Viterbi
alignment, i.e.~$v(x^n)=\arg\max_{s^n}p(s^n|x^n)$, where
$p(s^n|x^n)=\P(Y^n=s^n|X^n=x^n)$. Obviously, the Viterbi alignment
is not necessarily unique. The Viterbi alignment minimizes also the
following risk:
\begin{equation}\label{loglikerisk}
\R_{\infty}(s^n|x^n):=-{1\over n}\ln p(s^n|x^n).
\end{equation} The log-likelihood based risk (\ref{loglikerisk})
is often preferable to use since it allows various generalizations,
see (\ref{hybrid}).
Another common classifier is based on the pointwise loss function
\begin{equation}\label{point-loss}L_1(a^n,b^n)={1\over
n}\sum_{t=1}^nl(a_t,b_t),\end{equation} where $l(a_t,b_t)\geq 0$ is
the loss of classifying the $t$-th symbol $a_t$ as $b_t$. Typically,
for every state $s$, $l(s,s)=0$. Let us denote the corresponding
risk by $R_1(s^n|x^n)$:
$$R_1(s^n|x^n)={1\over n}\sum_{t=1}^nR^t_1(s_t|x^n),$$
where $R^t_1(s|x^n):=\sum_{a\in S} l(a,s)p_t(a|x^n)$ and
$p_t(a|x^n):=\P(Y_t=a|X^n=x^n).$ Most frequently $l(s,s')=I_{\{s\ne
s'\}}$, and then $R_1(s^n|x^n)$ just counts the expected number of
misclassified symbols given that the data are $x^n$ and the sequence
$s^n$ is used for segmentation. For that $l$,
\begin{equation}\label{r1symm}
R_1(s^n|x^n)=1-{1\over n}\sum_{t=1}^n p_t(s_t|x^n).
\end{equation}
The minimizer of (\ref{r1symm})  over all the possible state
sequences is called the {\it pointwise maximum a posteriori} (PMAP)
alignment. The Viterbi and the PMAP-classifier  -- the so-called
standard classifiers -- are by far the two most popular classifiers
used in practice.
\\
We shall also consider the risk
$$\R_1(s^n|x^n):=-{1\over n}\sum_{t=1}^n\ln p_t(s_t|x^n).$$
The risks $R_1$ and $\R_1$ are closely related. Minimizing
(\ref{r1symm}) over all possible state sequences is clearly
equivalent to minimizing $\R_1$, but this is not necessarily so for
restricted minimization.  The importance of $\R_1$ and $\R_{\infty}$
becomes apparent in \cite{seg}, where  the following penalized
$\R_1$-risk is considered:
\begin{equation}\label{hybrid}
\R_{C}(s^n|x^n):=\R_1(s^n|x^n)+C\R_{\infty}(s^n|x^n).
\end{equation}
Here $C\geq 0$ is a given regularization constant. The risk $\R_{C}$
naturally interpolates between the two standard alignments: for
$C=0$ the minimizer of (\ref{hybrid}) is the PMAP-alignment, and it
is not hard to see that for $C$ big enough the minimizer of
(\ref{hybrid}) is the Viterbi alignment. Obviously, the likelihood
of the minimizer of (\ref{hybrid}) increases with $C$ as well as the
$\R_1$-risk. In \cite{seg} it is  shown that
 minimizing the risk $\R_C$ for an integer $C$ is closely related to
maximizing the expected number of correctly estimated tuples of
$C+1$ adjacent states. In \cite{seg} it is also shown that
minimization of $\R_{C}(s^n|x^n)$ as well as of
$R_1(s^n|x^n)+C\R_{\infty}(s^n|x^n)$ can be carried out by a dynamic
programming algorithm that is similar to the Viterbi algorithm and
easy to implement.
\subsection{Organization of the paper and main results}
\noindent The quantity $R(g,x^n):=R(g(x^n)|x^n)$ measures the
goodness of a classifier $g$, when it is applied to the observations
$x^n$. When $g$ is optimal in the sense of risk, then
$R(g,x^n)=\min_{s^n}R(s^n|x^n)=:R(x^n)$. We are interested in the
random variables $R(g,X^n)$. The present paper deals mostly with
convergence of the risks of Viterbi alignments. The results are all
largely based on the regenerativity of the Viterbi process
$\{V_t\}_{t=1}^{\infty}$, which is an $S$-valued stochastic process
that is in a sense the limit of the random vectors $v(X^n)$ as $n$
grows. The existence of the Viterbi process is crucial and not
obvious, our analysis is based on the results in \cite{IEEE,
AVT4,K2}, where the Viterbi process is constructed piecewise.
\\
In this paper we shall show that under fairly general assumptions on
an HMM, the random variables $R_1(v,X^n)$, $\R_1(v,X^n)$ as well as
$\R_{\infty}(X^n):=\R_{\infty}(v,X^n)$  all converge to constant
limits almost surely. These convergences are stated in Theorems
\ref{main1}, \ref{main2} and \ref{main3}, which are the main results
of the paper.
%
%
The limits -- {\it asymptotic risks} -- are constants that all
depend on the model and characterize the goodness of the
segmentation based on the Viterbi alignment. If, for example, $R_1$
is the limit of $R_1(v,X^n)$ and $R_1^*$ is the limit of $R_1(X^n)$,
then the difference $R_1-R^*_1$ shows how  well the Viterbi
alignment performs the segmentation in the long run in the sense of
$R_1$-risk
 in comparison to the best possible alignment.
If $R_1$-risk is defined as in (\ref{r1symm}), then for $n$ big
enough the Viterbi alignment makes approximatively $nR_1$
classification errors, while the best alignment in this case -- the
PMAP-alignment -- makes approximatively $nR^*_1$ errors. Since the
model is known, the asymptotic risks could in principle be found
theoretically, but the convergence theorems show that they could
also be found by simulations.
\\
The results concerning the construction of the Viterbi process are
introduced in Subsection \ref{subsec:viterbi}. The piecewise
construction under general assumptions is rather technical (see
\cite{IEEE,K2}). However, when it is performed, the regenerativity
of the Viterbi process as well as the ergodicity of the double-sided
Viterbi process easily follow. The references to necessary results
from the theory of regenerative processes are given in Subsection
\ref{subsec:reg}.
\\
Section \ref{sec:r1} deals with the convergence of the $R_1$-risk.
We prove that $R_1(v,X^n)$ converges to a constant $R_1$ almost
surely.
Section \ref{sec:logr1} proves the convergence of the $\R_1$-risk
for the Viterbi and PMAP-alignment. Since the regenerativity of the
PMAP-process which is the analogue of the Viterbi process for the
PMAP-alignment, is not proved, the regenerativity-based methods
cannot be used for the long-run analysis of PMAP-alignments.
However, as shown in \cite{PMAP}, the convergence of the $R_1$-risk
of the PMAP-alignment can be proved with a completely different
method based on the exponential forgetting or smoothing
probabilities. The exponential forgetting inequalities are
introduced in Subsection \ref{subsec:exp} and in Section
\ref{sec:logr1} we show that they imply also the convergence of the
$\R_1$-risk of the PMAP-alignment.
In Section \ref{sec:loglike}, the convergence of the log-likelihood
or $\R_{\infty}$-risk is proved.
\\
There is no universal method known yet to prove the convergence of
general risks and every optimal alignment needs a special treatment.
For example, the convergence of
$\R_C(X^n)=\min_{s^n}\R_{C}(s^n|X^n)$ (as well as of several other
more general risks introduced in \cite{seg})   has not yet been
proved, although it is reasonable to conjecture that it holds.
Moreover, we conjecture that the dynamic programming algorithm for
finding the minimizer of $\R_C$-risk together with the exponential
smoothing could be used to find the $\R_C$-optimal alignment process
piecewise. If this is true, then the alignment process is
regenerative and the results and methods in the present paper can be
applied to many other optimal alignments.
\section{Preliminary results}
\subsection{Regenerativity}\label{subsec:reg}
\noindent We are following the coupling  approach developed by
Thorisson in \cite{thorisson}. One of the main instruments we are
going to use is that any regenerative process can be successfully
coupled with a stationary and ergodic regenerative process (Theorem
\ref{key}). With a successful coupling, a general pathwise limit
theorem for the Viterbi alignment (Theorem \ref{koond-r}) can be
proved. This is the main preliminary result and it can be used for
many other purposes besides proving the convergence of risks.
\\
Let $Z=\{Z_t\}_{t=1}^{\infty}$ in $(\Omega,{\cal F},\P)$ be a ${\cal
Z}:=\mathbb{R}^{d}$-valued classical regenerative  process with
respect to the renewal process $S=\{S_t \}_{t=0}^{\infty}$ (see,
e.g.~Chapter 10 in \cite{thorisson}). Following the notation in
\cite{thorisson}, we shall denote the regenerative process by
$(Z,S)$. Let $T_1:=S_1-S_0$. The regenerative process $(Z,S)$ is
{\it positive recurrent} if $ET_1<\infty$ and {\it aperiodic} if
$T_1$ is aperiodic, i.e.~$\P(T_1\in a\mathbb{N})<1$ for every $a>1$.
A pair $(Z',S')$ is a {\it version} of the regenerative process
$(Z,S)$ if it is also regenerative and
$\theta_{S_0}(Z,S)\buildrel{D}\over{=}\theta_{S'_0}(Z',S'),$ where
$\theta_t$ is a shift operator:
$\theta_t(x_1,x_2,\ldots)=(x_{t+1},x_{t+2},\ldots)$, and  $\td$
means equal in law. The version $(Z^o,S^o):=\theta_{S_0}(Z,S)$ of
$(Z,S)$ is a {\it zero-delayed} regenerative process.
Thus, $S^o_0=T_1$. Recall that $(Z,S)$ is stationary if
$\theta_t(Z,S)$ has the same distribution as $(Z,S)$. If $(Z,S)$ is
positive recurrent regenerative, then there exists a stationary
version $(Z^*,S^*)$ of this process such that the distribution of
the delay length $S_0^*$ is given by
$$\P(S_0^*=k)={1\over ET_1}\P(T_1>k),\quad k\geq 0,$$
and for every $\sigma({\cal Z}^{\infty})$-measurable function $g:
{\cal Z}^{\infty}\to \mathbb{R}$ the following inequality holds:
\begin{equation}\label{jaotus}
Eg(Z^*_1,Z^*_2,\ldots)={1\over
ET_1}E\big[\sum_{t=0}^{T_1-1}g(\theta_t(Z^o))\big],
\end{equation}
see, e.g.~Theorem 2.1 and 2.2 of Chapter 10 in \cite{thorisson} or
Theorem 6.1 in \cite{Kalashnikov}.
\\\\
Recall that a sub-$\sigma$-algebra of ${\cal F}$ is called {\it
trivial} if its elements have probability 1 or 0. In the following
we consider two $\sigma$-algebras: the tail-$\sigma$-algebra ${\cal
T}:=\cap_{t=1}^{\infty}\theta_t^{-1}(\sigma({\cal Z}^{\infty}))$ and
the $\sigma$-algebra of shift-invariant sets ${\cal I}:=\{A\in
\sigma({\cal Z}^{\infty}): \theta^{-1}_t A=A\}$. A stationary ${\cal
I}$-trivial process is ergodic. Since ${\cal I}\subseteq {\cal T}$
(see Section 5.1 in \cite{thorisson}), a stationary ${\cal
T}$-trivial process (sometimes also called regular) is also ergodic.
The following version of Theorem 3.3 of Chapter 10 in
\cite{thorisson} states that an aperiodic positive recurrent
regenerative process can be successfully coupled with a stationary
ergodic process.
\begin{theorem}\label{key}
Let $(Z,S)$ be an aperiodic and positive recurrent regenerative
process. Let  $(Z^*,S^*)$ be a stationary version of it. Then the
following statements hold:
\begin{description}
  \item[a)] The space $(\Omega,{\cal F},\P)$ can be extended to support a
finite random time $T$ and a copy $Z'$ of $Z^*$ such that $(Z,Z',T)$
is a successful exact coupling of $Z$ and $Z^*$, i.e.
$$\theta_TZ=\theta_TZ',\quad\text{where}\quad Z'\buildrel{D}\over{=}
Z^*.$$
  \item[b)] The processes $Z$ and $Z'$ are  ${\cal T}$-trivial.
\end{description}
\end{theorem}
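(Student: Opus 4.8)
The plan is to derive part a) from the classical renewal coupling and then to obtain part b) from a) together with a regeneration/martingale argument. For a), I would first enlarge the space to carry, independently of $Z$, a process $\bar Z\td Z^*$ built from the stationary delay $S_0^*$ (with $\P(S_0^*=k)=\P(T_1>k)/ET_1$) followed by i.i.d.\ cycles; write $\{S_n\}$ and $\{\bar S_m\}$ for the two regeneration sequences and let $T$ be the first time that is simultaneously a regeneration epoch of $Z$ and of $\bar Z$. I would then define $Z'$ to coincide with $\bar Z$ on $\{1,\dots,T-1\}$ and to copy the future of $Z$ from $T$ on, i.e.\ $\theta_T Z':=\theta_T Z$. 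Since $T$ is a regeneration epoch of both processes and the post-regeneration part of any version is distributed as the zero-delayed process $Z^o$ and independent of the past, this surgery preserves the law, so $Z'\td Z^*$, while $\theta_T Z=\theta_T Z'$ holds by construction; this is the successful exact coupling $(Z,Z',T)$ claimed in a).

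The main obstacle in a) is to show $T<\infty$ almost surely, and this is exactly where aperiodicity and positive recurrence enter. The pair of forward recurrence times $(R_t,\bar R_t)$ of the two independent renewal processes is a Markov chain on $\mathbb{N}_0^2$, and a common regeneration epoch is precisely a visit to $(0,0)$; under $ET_1<\infty$ the marginal residual-life chains are positive recurrent (stationary weights $\propto\P(T_1>j)$) and aperiodicity makes each, hence the product, irreducible and aperiodic, so the product chain is positive recurrent and hits $(0,0)$ in finite time a.s. I would simply invoke this classical discrete renewal coupling (see \cite{thorisson}).

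For b), the elementary pathwise lemma is: if $\theta_T W=\theta_T W'$ with $T<\infty$ a.s., then for every $A\in{\cal T}$ one has $1_A(W)=1_A(W')$ a.s., because on $\{T=t\}$ membership in the tail event $A$ is decided through $\theta_t$ and the two shifts agree (writing $A=\theta_t^{-1}A_t$ gives $1_A(W)=1_{A_t}(\theta_t W)=1_{A_t}(\theta_t W')=1_A(W')$). Applying this to a) yields $\P(Z\in A)=\P(Z'\in A)=\P(Z^*\in A)$ for all $A\in{\cal T}$, and the same argument applied to the coupling of $Z^o$ with $Z^*$ gives $\P(Z^o\in A)=\P(Z^*\in A)$. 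Thus $Z$, $Z^o$ and $Z^*$ agree on ${\cal T}$, and it remains only to prove that the stationary version $Z^*$ is ${\cal T}$-trivial.

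To finish, fix $A=A_0\in{\cal T}$ and set $p:=\P(Z^*\in A)$. Representing $A=\theta_k^{-1}A_k$ produces a consistent family with $A_k=\theta_j^{-1}A_{k+j}$, so each $A_k\in{\cal T}$; by stationarity $\P(Z^*\in A_k)=p$ for all $k$, and by the agreement on ${\cal T}$ just established, $h(k):=\P(Z^o\in A_k)=p$ for every $k$. Conditioning on the history ${\cal F}^*_n$ up to the $n$-th regeneration epoch $S^*_n$ of $Z^*$, and using that $\theta_{S^*_n}Z^*\td Z^o$ is independent of ${\cal F}^*_n$ together with $\{Z^*\in A\}=\{\theta_{S^*_n}Z^*\in A_{S^*_n}\}$, we get $\P(Z^*\in A\mid{\cal F}^*_n)=h(S^*_n)=p$, a deterministic constant. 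Letting $n\to\infty$, Lévy's 0-1 law forces $1_A(Z^*)=p$ a.s., so $p\in\{0,1\}$; hence $Z^*$, and therefore also $Z$ and $Z'$, are ${\cal T}$-trivial. The subtle point is the random shift index: the conditional expectation collapses to the \emph{constant} $h(S^*_n)=p$ precisely because $A_k$ is itself a tail event, on which a) forbids distinguishing $Z^o$ from $Z^*$.
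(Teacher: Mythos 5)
Your proof is correct, but it takes a genuinely different route from the paper's. The paper disposes of this theorem in a few lines by reducing it to Theorem 3.3 of Chapter 10 in Thorisson's monograph: aperiodicity means $T_1$ is a lattice with span $1$, the delays $S_0$ and $S_0^*$ are integer-valued, so that theorem applies directly and yields both a) (its claim a)) and the ${\cal T}$-triviality of $Z$ (its claim d)); the ${\cal T}$-triviality of $Z'$ is then obtained by noting that $Z'$, being a stationary version, is itself an aperiodic positive recurrent regenerative process with integer-valued delay, so the same theorem applies to it as well. You instead reprove the content of that citation: for a) you run the classical discrete renewal coupling (independent copy $\bar Z\td Z^*$, first common regeneration epoch $T$, finiteness of $T$ via positive recurrence and aperiodicity of the product forward-recurrence-time chain, surgery at $T$), and for b) you combine the pathwise tail lemma for exact couplings with a L\'evy $0$--$1$ law argument along the regeneration epochs of $Z^*$, establishing ${\cal T}$-triviality of $Z^*$ first and transferring it to $Z$ and $Z'$ through the coupling. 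Your route buys self-containedness and makes transparent exactly where the hypotheses enter: aperiodicity gives irreducibility and aperiodicity of the product residual-life chain, and positive recurrence gives its stationary weights $\propto\P(T_1>j)$. The paper's route buys brevity and sidesteps the two places where your argument leans on standard but nontrivial facts, namely that the first common regeneration epoch is a randomized stopping time at which both post-$T$ processes regenerate independently of the joint past (so the surgery really preserves the law of $Z^*$), and that irreducibility of the product chain genuinely requires the span-$1$ condition. Neither is a gap --- both are classical --- but they are the points a referee would ask you to prove or to cite, which is precisely what the paper does by invoking Thorisson.
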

\begin{proof} The process $Z$ is aperiodic, which means that $T_1$ is a lattice with span 1.
Since $(Z,S)$ and $(Z^*,S^*)$ are discrete, the random variables
$S_0$ and $S^*_0$ are $\mathbb{Z}$-valued. So the assumptions of
Theorem 3.3 of Chapter 10 in \cite{thorisson} are fulfilled. The
claim {\bf a)} is claim {\bf a)} of that theorem, the ${\cal
T}$-triviality of $Z$ is claim {\bf d)} of that theorem. Finally,
the process $Z'$, being a stationary version of $Z$, is also an
aperiodic regenerative process with $S'_0$ being
$\mathbb{Z}$-valued. Hence it satisfies the same assumptions and is
therefore also
 ${\cal T}$-trivial.
\end{proof}
\begin{corollary}\label{cor:birkhoff}  Let $(Z,S)$ be an aperiodic and positive recurrent regenerative
process and  let  $(Z^*,S^*)$ be a stationary version of it. Let $g:
{\cal Z}^{\infty}\to \mathbb{R}$ be such that
$E|g(Z^*_1,Z^*_2,\ldots)|<\infty$. Then
\begin{equation}\label{birkhoff2} {1\over n}\sum_{t=1}^{n}
g(Z_t,Z_{t+1},\ldots)\to E[g(Z^*_1,Z^*_2,\ldots)]\quad  \text{  a.s.
and in}\,\, L_1.
\end{equation}
\end{corollary}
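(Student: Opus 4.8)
The plan is to combine the successful coupling of Theorem \ref{key} with the pointwise ergodic theorem applied to the stationary ergodic version, and then transfer the resulting limit from the coupled copy back to $Z$. Throughout write $A_n:={1\over n}\sum_{t=1}^n g(Z_t,Z_{t+1},\ldots)$ and $A'_n:={1\over n}\sum_{t=1}^n g(Z'_t,Z'_{t+1},\ldots)$, where $Z'$ is the copy of $Z^*$ furnished by Theorem \ref{key}.

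First I would check that $Z^*$ is ergodic. By Theorem \ref{key} the copy $Z'$, which satisfies $Z'\td Z^*$, is ${\cal T}$-trivial; since tail-triviality depends only on the law of the process, $Z^*$ is ${\cal T}$-trivial as well. Being stationary and, via ${\cal I}\subseteq {\cal T}$, also ${\cal I}$-trivial, $Z^*$ is ergodic. Birkhoff's ergodic theorem applied to $Z'$ (which has the same law as the stationary ergodic $Z^*$) with the integrable $g$ then gives $A'_n\to E[g(Z^*_1,Z^*_2,\ldots)]$ almost surely and in $L_1$.

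Next I would transport this limit to $Z$ through the coupling. By Theorem \ref{key}a) there is a finite time $T$ with $\theta_TZ=\theta_TZ'$, so the tails $(Z_t,Z_{t+1},\ldots)$ and $(Z'_t,Z'_{t+1},\ldots)$ agree for every $t>T$, and hence $g(Z_t,Z_{t+1},\ldots)=g(Z'_t,Z'_{t+1},\ldots)$ for all $t>T$. Therefore $A_n$ and $A'_n$ differ only through the at most $T$ initial terms, giving
\[
|A_n-A'_n|\le {1\over n}\sum_{t=1}^{T}\big|g(Z_t,Z_{t+1},\ldots)-g(Z'_t,Z'_{t+1},\ldots)\big|=:{D\over n}.
\]
Since $T<\infty$ almost surely and each summand is finite, the bound tends to $0$ almost surely, so together with the previous paragraph $A_n\to E[g(Z^*_1,Z^*_2,\ldots)]$ almost surely.

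For the $L_1$ convergence it remains to show $A_n-A'_n\to0$ in $L_1$, since $A'_n$ already converges in $L_1$; by the displayed bound this reduces to $E[D]<\infty$. Here I would exploit that positive recurrence $ET_1<\infty$ together with the hypothesis $E|g(Z^*_1,Z^*_2,\ldots)|<\infty$ makes the absolute reward over one regeneration cycle integrable, since (\ref{jaotus}) gives $E\big[\sum_{t=0}^{T_1-1}|g(\theta_tZ^o)|\big]=ET_1\cdot E|g(Z^*_1,Z^*_2,\ldots)|<\infty$. As the coupling occurs at a regeneration epoch, $D$ is controlled by the $|g|$-reward accumulated over the cycles preceding $T$, and converting this per-cycle integrability into $E[D]<\infty$ over the random horizon set by the coupling time is the step I expect to be the main obstacle, since it needs the joint control of $T$ and the accumulated reward that is implicit in the construction behind Theorem \ref{key}. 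An alternative for this last step is to prove uniform integrability of the nonnegative averages ${1\over n}\sum_{t=1}^n|g(Z_t,Z_{t+1},\ldots)|$ directly, by pairing their almost sure convergence to the constant $E|g(Z^*_1,Z^*_2,\ldots)|$ with the convergence of their means.
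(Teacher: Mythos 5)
Your treatment of the almost sure convergence coincides with the paper's: ergodicity of $Z^*$ via ${\cal T}$-triviality from Theorem \ref{key}, Birkhoff's theorem applied to the coupled copy $Z'$, and transfer to $Z$ because the two paths agree after the finite coupling time $T$, so the Ces\`aro averages differ only in a finite initial segment. That part is correct and is exactly the paper's argument.

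The problem is your primary route to the $L_1$ statement. You reduce it to $E[D]<\infty$ with $D=\sum_{t=1}^{T}|g(Z_t,Z_{t+1},\ldots)-g(Z'_t,Z'_{t+1},\ldots)|$, and this reduction does not go through: Theorem \ref{key} only guarantees that the exact coupling time $T$ is finite almost surely, not that it is integrable, and nothing in the hypotheses ($ET_1<\infty$ and $E|g(Z^*_1,Z^*_2,\ldots)|<\infty$) ties the size of $T$ to the accumulated reward in a way that would make $D$ integrable. The identity you quote from (\ref{jaotus}) controls the reward over a single cycle of the zero-delayed process, not over the random number of cycles preceding $T$, so the ``main obstacle'' you flag is a genuine one rather than a technicality. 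The alternative you sketch in your last sentence is the right move and is precisely what the paper does: apply Scheff\'e's lemma separately to the nonnegative averages built from $g^+$ and $g^-$ --- each converges almost surely to the corresponding constant by the first part of your argument, and convergence of the means then upgrades this to $L_1$. You should promote that remark to the actual proof and discard the $E[D]<\infty$ reduction. (Even this route silently uses the convergence of the means $E\big[{1\over n}\sum_{t=1}^n g^{\pm}(Z_t,Z_{t+1},\ldots)\big]$, which the paper also leaves implicit; it is there, not in controlling $T$, that the regenerative structure enters the $L_1$ part.)
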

\begin{proof} Let us extend the space $(\Omega,{\cal F},\P)$ so
that the statements of Theorem \ref{key} hold. Then the process $Z'$
is stationary and ergodic having the same distribution as $Z^*$. By
Birkhoff's ergodic theorem then,
\begin{equation}\label{birkhoff}
{1\over n}\sum_{t=1}^{n} g(Z'_t,Z'_{t+1},\ldots)\to
E[g(Z'_1,Z'_2,\ldots)]=E[g(Z^*_1,Z^*_2,\ldots)]\quad \text{  a.s.
and in }L_1.\end{equation} Since the original process $Z$ can be
successfully coupled with $Z'$, it holds for almost every
realization of $Z$ and $Z'$ that they differ at the finite beginning
only. Since for a pathwise limit the beginning does not matter, we
immediately get the almost sure convergence of (\ref{birkhoff2}).
The $L_1$-convergence follows from applying Scheffe's lemma
separately to $g^+(Z_t,Z_{t+1},\ldots)$ and
$g^-(Z_t,Z_{t+1},\ldots)$.\end{proof}
\\\\
\noindent
{\bf Remark:} If $(Z,S)$ is positive recurrent but not aperiodic,
then Theorem \ref{key} cannot be applied. However, using Theorem 2.2
of \cite{thorisson} and noting that aperiodicity is not used in its
proof, a similar result can be obtained for   shift-coupling instead
of exact coupling. The process $Z'$ can be shown to be ${\cal
I}$-trivial and hence ergodic, thus Corollary \ref{cor:birkhoff}
still holds. In this paper we consider only aperiodic regenerative
processes.
\\\\
If $f: {\cal Z}\to \mathbb{R}$ is measurable, then  the convergence
(\ref{birkhoff2}) together with (\ref{jaotus}) yields
\begin{equation}\label{renewal}
{1\over n}\sum_{t=1}^n f(Z_t)\to Ef(Z^*_1)={1\over
ET_1}E\big[\sum_{t=1}^{T_1}f(Z^o_t)\big]={1\over
ET_1}E\big[\sum_{t=S_0+1}^{S_1}f(Z_t)\big]\quad \text{ a.s. and in }
L_1.\end{equation}
\subsection{Infinite Viterbi alignment}\label{subsec:viterbi}
\subsubsection{One-sided infinite Viterbi alignment}
\noindent {\bf Def.} Let for every $n$, $g^n: {\cal X}^n\to S^{n}$
be a classifier. We say that the sequence $\{g^n\}$ of classifiers
can be {\it extended to infinity}, if there exists a function $g:
{\cal X}^{\infty}\to S^{\infty}$ such that for almost every
realization $x^{\infty}\in {\cal X}^{\infty}$ the following
statement holds: for every $k\in \mathbb{N}$ there exists
$m(x^{\infty})\geq k$ such that for every $n\geq m$ the first $k$
elements of $g^n(x^n)$ are the same as the first $k$ elements of
$g(x^{\infty})$, i.e.~$g^n(x^n)_i=g(x^{\infty})_i,$ $i=1,\ldots,k$.
The function $g$
will be referred to as an {\it infinite alignment}.\\\\
If every observation is not classified independently, then the
existence of an infinite alignment is not trivial. It often happens
that adding one more observation $x_{n+1}$ changes the alignment
$g^n(x^n)$. This happens often with Viterbi or PMAP-alignments. The
existence of an infinite alignment allows to study asymptotic
properties of the alignment, which is usually done via the
corresponding {\it  alignment process}
$\{G_t\}_{t=1}^{\infty}:=g(X)$.
We consider the existence of infinite Viterbi alignments. Under
rather restrictive assumptions on HMMs the existence of an infinite
Viterbi alignment was first proved in \cite{caliebe1}. In
\cite{IEEE} it was proved under less restrictive assumptions. We now
introduce these assumptions and the corresponding results.
\\\\
Recall that $f_s$ are the densities of $P_s:=\P(X_1\in \cdot|Y_1=s)$
with respect to some reference measure $\mu$ on $({\cal X},{\cal
B})$. For each $s\in S$, let $G_s:=\{x\in\mathcal{X}: f_s(x)>0\}.$
We call a subset $C\subset S$  {\it a cluster} if the following
conditions are satisfied:
$$\min_{j\in C}P_j(\cap _{s\in C}G_s)>0 \quad{\rm and}\quad\max_{j\not\in C}P_j(\cap _{s\in C}G_s)=0.$$
Hence, a cluster is a maximal subset of states such that $G_C=\cap
_{s\in C}G_s$, the intersection of the supports of the corresponding
emission distributions, is  `detectable'. Distinct clusters need not
be disjoint and  a cluster can consist of a single state. In this
latter case such a state is not hidden, since it is exposed by any
observation it emits. If $|S|=2$, then $S$ is the only  cluster
possible, because otherwise the underlying Markov chain would cease
to be hidden.
The existence of $C$ implies the existence of a set $\X \subset
\cap_{s\in C }G_s$ and $\e>0$, $M<\infty$
 such that $\mu (\X)>0$, and $\forall x\in
{\X}$ the following statements hold: (i) $\e<\min_{s\in C} f_s(x)$;
(ii) $\max_{s\in C} f_s(x)<M$; (iii) $\max_{s\not \in C} f_s(x)=0$.
For proof, see \cite{IEEE}.
\\
The following two assumptions on HMMs are needed for the existence
of an infinite Viterbi alignment.\\\\
{\bf A1 (cluster-assumption)} There exists a cluster $C\subset S$
such that the sub-stochastic matrix $R=(P(i,j))_{i,j\in C}$ is
primitive, i.e.~there is a positive integer $r$ such that the $r$th
power of $R$ is strictly positive.\\
{\bf A2} For each state $l\in S$,
\begin{equation}\label{lll}
P_l\left(\left\{x\in\mathcal{X}:~f_l(x)p^*_{l}> \max_{s,s\ne
l}f_s(x)p^*_{s}\right\}\right)>0,\quad p^*_l=\max_jp_{j,l},\,\forall
l\in S.
\end{equation}
The cluster assumption {\bf A1} is often met in practice. It is
clearly satisfied if all elements of the matrix $P$ are positive.
Since any irreducible aperiodic matrix is primitive, the assumption
{\bf A1} is also satisfied if the densities $f_s$ satisfy the
following condition: for every $x\in {\cal X}$, $\min_{s\in
S}f_s(x)>0$, i.e.~for all $s\in S$, $G_s={\cal X}$. Thus, {\bf A1}
is more general than the {\it strong mixing condition} (Assumption
4.3.21 in \cite{HMMraamat}) and also weaker than Assumption 4.3.29
in \cite{HMMraamat}. Note that {\bf A1} implies the aperiodicity of
$Y$, but not vice versa.
The assumption {\bf A2} is more technical in nature. In \cite{K2} it
was shown that for a two-state HMM, (\ref{lll}) always holds for one
state, and this is sufficient for the infinite Viterbi alignment.
Hence, for the case $|S|=2$, {\bf A2} can be relaxed. Another
possibilities for relaxing {\bf A2} are discussed in \cite{AVT4,
IEEE}. To summarize: we believe that the cluster assumption {\bf A1}
is essential for HMMs, while the assumption {\bf A2}, although
natural and satisfied for many models, can be relaxed. For more
general discussion about these assumptions, see
\cite{AVT4,IEEE,PMAP,K2}.
\\
In the following, let  $\tilde{V}^n=v^n(X^n)$, where $v^n$ is a
finite Viterbi alignment. Let $U_t$ and $W_t$ be the stopping times
defined as
\begin{equation} \label{UtVt} W_t=\min\{\tau \ge t+r+1: X_{\tau-r}^{\tau} \in \mathcal{X}_0^{r+1}\}\, , \quad
 U_t=\max\{\tau \le t-r-1: X_{\tau}^{\tau+r} \in \mathcal{X}_0^{r+1}\}\, .\end{equation}
The results of the present paper are largely based on the following
theorem, which has been proved in \cite{IEEE,AVT4}. See also Lemma
2.1 in \cite{iowa}.
%
%
%
\begin{theorem}\label{viterbi} Let $(X,Y)=\{(X_t,Y_t)\}_{t=1}^{\infty}$ be a one-sided ergodic HMM
satisfying {\bf A1} and {\bf A2}. Then there exists an infinite
Viterbi alignment $v: {\cal X}^{\infty}\to S^{\infty}.$ Moreover,
the finite Viterbi alignments $v^n: {\cal X}^n\to S^n$ can be chosen
so that the following conditions are satisfied:
\begin{description}
 \item[R1] the process $Z:=(X,Y,V)$, where $V:=\{V_t\}_{t=1}^{\infty}$ is
the alignment process, is a positively recurrent aperiodic
regenerative process with respect to some renewal process
$\{S_t\}_{t=0}^{\infty}$;
  \item[R2] there exists an integer $m>0$ such that $S_0>m$ and
  \begin{itemize}
  \item[1)] for all $j\ge 0$ such that $S_j+m\le n$,
  $\V^n_t=V_t$ for all $t \le S_j$,
  \item[2)] $S_j-S_{j-1}\ge m$, $j=1,2,\ldots$;
  \end{itemize}
  \item[R3] the renewal times $\{S_k\}$ have the following property:
    \begin{itemize}
    \item[1)] if $S_k>t$, then $W_t\le S_k +m$,
    \item[2)] if $S_k<t$, then $U_t>S_k-m$.
    \end{itemize}
\end{description}
\end{theorem}
\begin{proof} The required infinite alignment is constructed
piecewise, see \cite{IEEE}. The regenerativity and positive
recurrence is shown in Section 4 of \cite{AVT4}. The aperiodicity
follows from the aperiodicity of $Y$ that follows from ${\bf A1}$.
The piecewise construction guarantees both {\bf R2} and {\bf R3}.
\end{proof}
\\\\
From now on we assume that the finite Viterbi alignments $v^n: {\cal
X}^n\to S^n$ are chosen according to Theorem \ref{viterbi}. These
choices of alignments are called {\bf consistent}. Obviously, the
consistent choice becomes an issue only if the finite Viterbi
alignment is not unique. In practice, the consistent choices can be
obtained just by predefined tie-breaking rules. With consistent
choices, the process $\tilde{Z}^n:=\{(\V^n_t,X_t,Y_t)\}_{t=1}^n$
satisfies by {\bf R2} the following property: $\tilde{Z}^n_t=Z_t$
for every $t=1,\ldots, S_{k(n)}$, where $k(n)=\max\{k\geq 0: S_k + m
\leq  n\}.$
\\\\
We now present a theorem that generalizes Theorem 3.1 of Chapter VI
in \cite{asmussen}. The proof is based on the same argument and
given in Appendix. Let $p\in \mathbb{N}$ and $g_p: {\cal Z}^p \to
\mathbb{R}$ be measurable. Define for every $i=p,\ldots,n$
$$\tilde{U}_i^n:=g_p(\tilde{Z}^n_{i-p+1},\ldots,\tilde{Z}^n_i).$$
If $i\leq S_{k(n)}$, then
$\tilde{U}_i^n={U}_i:=g_p(Z_{i-p+1},\ldots, Z_i).$ Finally, let
$$M_k:=\max_{S_{k}<n\leq S_{k+1}}|\tilde{U}_{S_k+1}^n+\cdots
+\tilde{U}_n^n|.$$ The random variables $M_p,M_{p+1},\ldots$ are
identically distributed, but for $p>1$ not necessarily independent.
Recall that $Z^*$ is a stationary version of $Z$.
%
%
%
\begin{theorem}\label{koond-r}
Let $g_p$ be such that $EM_p<\infty$ and
$E|g_p(Z^*_1,\ldots,Z_p^*)|<\infty$. Then
\begin{equation} \label{asmussen:sum}
{1\over n-p+1}\sum_{i=p}^n \tilde{U}^n_i\to E U_p=Eg_p(Z^*_1,\ldots,
Z^*_p)\quad \text{a.s. and in }L_1.\end{equation}
\end{theorem}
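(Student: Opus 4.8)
The plan is to reduce the convergence of the triangular array $\{\tilde U_i^n\}$ to the convergence of the honest sum $\frac{1}{n}\sum_{i=p}^n U_i$, for which Corollary \ref{cor:birkhoff} applies directly, and to control the discrepancy between the two using the regenerative structure from Theorem \ref{viterbi}. First I would observe that by property {\bf R2}, for every $i\le S_{k(n)}$ we have $\tilde Z_j^n=Z_j$ for all $j\le i$, and hence $\tilde U_i^n=U_i$ whenever $i\le S_{k(n)}$. Therefore the triangular sum and the genuine sum agree up to index $S_{k(n)}$, and the entire difference is concentrated in the final block of indices from $S_{k(n)}+1$ to $n$. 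The strategy is thus to write
\[
\sum_{i=p}^n \tilde U_i^n \;=\; \sum_{i=p}^{S_{k(n)}} U_i \;+\; \Big(\sum_{i=S_{k(n)}+1}^{n}\tilde U_i^n\Big),
\]
and to show that after dividing by $n-p+1$ the first sum converges to $Eg_p(Z_1^*,\ldots,Z_p^*)$ while the second, the boundary term, is negligible.

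For the main sum I would apply Corollary \ref{cor:birkhoff} to the function $g=U_p$ on $Z$, using the hypothesis $E|g_p(Z_1^*,\ldots,Z_p^*)|<\infty$ to get $\frac{1}{n}\sum_{i=p}^n U_i \to Eg_p(Z_1^*,\ldots,Z_p^*)$ almost surely and in $L_1$; since $S_{k(n)}/n\to 1$ a.s.\ (because $k(n)\to\infty$ and $S_k/k\to ET_1$ by the renewal structure of {\bf R1}), the partial sum up to $S_{k(n)}$, suitably renormalized, has the same limit. The remaining task is to bound the boundary block. Here I would use the definition of $M_k$, which is exactly the maximal absolute value of such a terminal block over the window $S_k<n\le S_{k+1}$: the term $\big|\sum_{i=S_{k(n)}+1}^n \tilde U_i^n\big|$ is dominated by $M_{k(n)}$. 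Since the $\{M_k\}$ are identically distributed with $EM_p<\infty$, a standard Borel--Cantelli argument gives $M_{k(n)}/n\to 0$ a.s.: indeed $\P(M_k>\e k\text{ i.o.})=0$ follows from $\sum_k \P(M_p>\e k)<\infty$, and $k(n)\le n$ makes the per-index contribution vanish. This handles the almost sure statement.

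The main obstacle I anticipate is the boundary term, and specifically the passage from ``each $M_k$ is integrable'' to ``$M_{k(n)}/(n-p+1)\to 0$''. The subtlety is that the index $k(n)$ is random and tied to the data, and for $p>1$ the $M_k$ need not be independent, so one cannot invoke an i.i.d.\ lemma naively; one must instead exploit only that they are identically distributed with a finite first moment and use a monotonicity/dominated comparison along the (deterministic in $n$, but random-valued) sequence $k(n)$. The clean way is to note $M_{k(n)}\le \max_{0\le k\le K(n)} M_k$ for a deterministic bound $K(n)$ on $k(n)$, and then show $\frac{1}{n}\max_{k\le cn} M_k\to 0$ a.s.\ by Borel--Cantelli, which requires only $EM_p<\infty$. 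Finally, for the $L_1$-convergence I would combine the $L_1$ part of Corollary \ref{cor:birkhoff} for the main sum with an $L_1$ bound on the boundary term obtained from $E[M_{k(n)}]\le EM_p<\infty$ and uniform integrability, mirroring the Scheffé argument used in the corollary's proof.
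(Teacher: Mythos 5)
Your proposal is correct and follows essentially the same route as the paper's proof: the same split of the sum at $S_{k(n)}$, Corollary \ref{cor:birkhoff} plus $S_{k(n)}/n\to 1$ (elementary renewal theorem) for the main block, domination of the terminal block by $M_{k(n)}$, and a first Borel--Cantelli argument using only that the $M_k$ are identically distributed with $EM_p<\infty$, followed by a Scheff\'e-type step for $L_1$. The paper handles the random index more directly by proving $M_k/k\to 0$ a.s.\ along the whole sequence and bounding the boundary term by $M_{k(n)}/(k(n)-p+1)$, which makes your detour through a deterministic majorant $\max_{k\le K(n)}M_k$ unnecessary but harmless.
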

%
\subsubsection{Double-sided infinite Viterbi alignment}
\noindent {\bf Def.} Let for every $z_1,z_2\in \mathbb{Z}$,
$g_{z_1}^{z_2}: {\cal X}^{[z_1,z_2]}\to S^{[z_1,z_2]}$ be a
classifier. We say that the set $\{g_{z_1}^{z_2}\}$ of classifiers
can be {\it extended to infinity}, if there exists a function $g:
{\cal X}^{\mathbb{Z}}\to S^{\mathbb{Z}}$ such that for almost every
realization $x^{\infty}_{-\infty}\in {\cal X}^{\mathbb{Z}}$ the
following statement holds: for every $k\in \mathbb{N}$ there exists
$m\geq k$ (depending on $x_{-\infty}^{\infty}$) such that for every
$n\geq m$,
$$g_{-n}^{n}(x_{-n}^n)_i=g(x_{-\infty}^{\infty})_i,\quad
i=-k,\ldots,k.$$ The function $g$ will be referred to as an {\it
infinite double-sided alignment}.\\\\
The piecewise construction of the infinite Viterbi alignment allows
the double-sided extension as well.
\begin{theorem}\label{viterbi2} Let $(X,Y)=\{(X_t,Y_t)\}_{t=-\infty}^{\infty}$ be a double-sided ergodic HMM
satisfying {\bf A1} and {\bf A2}. Then there exists an infinite
Viterbi alignment $v: {\cal X}^{\mathbb{Z}}\to S^{\mathbb{Z}}.$
Moreover, the finite Viterbi alignments $v^{z_2}_{z_1}$ can be
chosen so that the following conditions are satisfied:
\begin{description}
  \item[RD1]  the process $(X,Y,V)$, where $V:=\{V_t\}_{t=-\infty}^{\infty}$ is
the alignment process, is a positively recurrent aperiodic
regenerative process with respect to some renewal process
$\{S_t\}_{t=-\infty}^{\infty}$;
  \item[RD2] there exists a nonnegative integer $m<\infty$ such that
  \begin{itemize}
    \item[1)] for every $j\geq 0$ such that $S_j+m\le n$, $\V^n_t=V_t$ for all $S_0\leq t \leq S_j;$
    \item[2)] $S_j-S_{j-1}\ge m$, $j \in {\mathbb{Z}}$;
  \end{itemize}
 \item[RD3] the renewal times $\{ S_k\}$ have the following property:
   \begin{itemize}
    \item[1)] if $S_k>t$, then $W_t\le S_k +m$,
    \item[2)] if $S_k<t$, then $U_t>S_k-m$;
   \end{itemize}
  \item[RD4] the mapping $v$ is a stationary coding, i.e.~$v(\theta(X))=\theta v(X)$,
  where $\theta$ is a shift operator:
  $\theta(\ldots,x_{-1},x_0,x_1,\ldots)=(\ldots,x_0,x_1,x_2,\ldots)$.
\end{description}
\end{theorem}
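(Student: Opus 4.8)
The plan is to mirror the one-sided construction of Theorem \ref{viterbi}, whose core is the piecewise assembly of the alignment along regeneration points (``barriers'') that are read off from the observation sequence alone. First I would recall this mechanism. Assumptions A1 and A2 guarantee that, with positive probability, blocks of $r+1$ consecutive observations fall into the detectable set $\X$ of the cluster $C$; whenever such a block occurs, primitivity of $R$ forces every finite Viterbi path to pass through $C$ inside that block in a manner that decouples the optimization to its left from the optimization to its right. The stopping times $W_t$ and $U_t$ of (\ref{UtVt}) locate the first such block after $t$ and the last such block before $t$, and the renewal times $\{S_k\}$ are selected among these decoupling positions. The key structural fact, which I would isolate, is locality: the restriction of a finite Viterbi alignment to any window bounded by two barriers is determined by the observations inside that window alone.

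For the double-sided extension I would proceed as follows. Since $(X,Y)$ is stationary and ergodic and A1 yields positive recurrence, the decoupling blocks occur infinitely often both to the left and to the right of every time point, almost surely; hence for almost every $x_{-\infty}^{\infty}$ there is a doubly infinite increasing sequence of barrier positions $\cdots<S_{-1}<S_0<S_1<\cdots$. I would then \emph{define} $v(x_{-\infty}^{\infty})$ block by block, setting the coordinates on each block between consecutive barriers equal to the local Viterbi path of that window. The decoupling property makes this assignment consistent across adjacent blocks and, on any fixed finite index range, equal to the finite alignment $v_{-n}^{n}(x_{-n}^n)$ for all large $n$; this is precisely the infinite double-sided alignment property demanded in the definition preceding the theorem. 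Properties RD1--RD3 then follow as in the one-sided arguments of \cite{AVT4}, now with the renewal process $\{S_k\}_{k\in\mathbb{Z}}$ indexed by $\mathbb{Z}$: positive recurrence and aperiodicity of $(X,Y,V)$ come from A1 as before, and RD2 and RD3 are immediate from the placement of the barriers.

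The genuinely new ingredient, and the step I expect to be the main obstacle, is RD4, the assertion that $v$ is a stationary coding. The strategy is to check that both constituents of the construction are shift-equivariant. The barrier rule depends on $x_{-\infty}^{\infty}$ only through membership conditions of the form $x_{\tau-r}^{\tau}\in\X^{\,r+1}$, which are invariant under relabelling of the time axis; consequently the barrier set of $\theta(x)$ is exactly the barrier set of $x$ shifted by one. The local Viterbi computation on a window likewise depends only on the observations inside that window, so shifting the input shifts each block together with its computed path. Combining the two observations yields $v(\theta(X))=\theta\,v(X)$ coordinatewise. The subtlety to handle with care is that the tie-breaking rule used to render the finite Viterbi alignments consistent must itself be chosen shift-invariantly, so that no spurious asymmetry between past and future is introduced; once the ties are resolved by a shift-equivariant rule, the equivariance of $v$, and hence RD4, follows.
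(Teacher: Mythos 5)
Your proposal is correct and follows essentially the same route as the paper: the piecewise construction between observation-determined barriers carries over verbatim to give RD1--RD3 as in the one-sided Theorem \ref{viterbi}, and RD4 is obtained from the shift-equivariance of the barrier rule together with the locality of the block-wise Viterbi computation, which is exactly the content of the paper's appeal to the separation of barriers (Lemma 3.2 in \cite{IEEE}). Your explicit remark that the tie-breaking must itself be chosen shift-equivariantly is a point the paper leaves implicit in its notion of consistent choices, but it is not a divergence in method.
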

\begin{proof} The proof of {\bf RD1}, {\bf RD2} and {\bf RD3} is the same as in
Theorem \ref{viterbi}. Note the difference between {\bf R2} and {\bf
RD2}. The stationarity of $v$ follows from the fact that the
barriers in the construction of the infinite alignment are separated
(Lemma 3.2 in \cite{IEEE}).\end{proof}

\noindent In the following, the finite Viterbi alignments
$v_{z_1}^{z_2}$ are chosen to be consistent. The property {\bf RD4}
is important. Since $X$ is an ergodic process, from {\bf RD4} it
follows that the double-sided alignment process
$V=\{V_t\}_{t=-\infty}^{\infty}$ as well as the process
$\{(X_t,Y_t,V_t)\}_{t=-\infty}^{\infty}$ is an ergodic process. Let
$Z^*$ denote the restriction of
$\{(X_t,Y_t,V_t)\}_{t=-\infty}^{\infty}$  to the  nonnegative
integers, i.e. $Z^*=\{(X_t,Y_t,V_t)\}_{t=1}^{\infty}$. By {\bf RD2},
$Z^*$ is a stationary version of $Z$ as in {\bf R1}. Thus
$(X_0,Y_0,V_0)\buildrel{D}\over{=}(X^*_1,Y^*_1,V^*_1)=Z_1^*$ and we
shall often use this. Note that the one-sided Viterbi process $V$ in
{\bf R1} is not defined at time zero so that the random variable
$V_0$ always implies the double-sided, and hence stationary case.
\subsection{Smoothing probabilities}\label{subsec:exp}
\noindent Let $(X,Y)=\{(X_t,Y_t)\}_{t=-\infty}^{\infty}$ be a
double-sided HMM. From Levy's martingale convergence theorem it
immediately follows that for every state $j\in S$ and $z,t\in
\mathbb{Z}$, the limits of the smoothing probabilities
$\P(Y_t=j|X_z^{\infty}):=\lim_n\P(Y_t=j|X_z^n)$ and
$\P(Y_t=j|X_{-\infty}^{\infty}):=\lim_{z\to
-\infty}\P(Y_t=j|X_z^{\infty})$ exist almost surely. In \cite{PMAP}
it is shown that under {\bf A1} these probabilities satisfy the
following exponential forgetting inequalities:
\begin{align}\label{ineqrho1}
\|\P(Y_t\in \cdot|X_1^{\infty})-\P(Y_t\in
\cdot|X_{-\infty}^{\infty})\|&\leq C\rho^{t}\quad \rm{a.s. }\, ,\\
\label{ineqrho2} \|\P(Y_t\in \cdot|X_1^{\infty})-\P(Y_t\in
\cdot|X_{1}^{n})\|&\leq C\rho^{n-t}\quad \rm{a.s. },
\end{align}
where $C$ is a finite positive random variable, $\rho\in (0,1)$, in
the first inequality $t\geq 1$, and in the second inequality $n\geq
t\geq 1$. Here $\|\cdot\|$ stands for the total variation distance.
In what follows, we shall use the notation
$p_t(j|x_{-\infty}^{\infty}):=\P(Y_t=j|X_{-\infty}^{\infty}=x_{-\infty}^{\infty})$.
\section{Convergence of $R_1$-risk}\label{sec:r1}
\noindent Let the loss function be defined as in (\ref{point-loss})
and let $v^n$ be a consistently chosen Viterbi alignment. If the
underlying Markov chain would not be hidden, the {\it empirical risk
of the Viterbi alignment} could be directly calculated as follows:
\begin{equation}\label{true}
R_1(Y^n,X^n)={1\over n}\sum_{t=1}^n l(Y_t,v^n_t(X^n))={1\over
n}\sum_{t=1}^n l(Y_t,\V^n_t).
\end{equation}
The conditional expectation of $R_1(Y^n,X^n)$ given $X^n$ is the
random variable $R_1(v,X^n)=E[R_1(Y^n,X^n)|X^n].$
Since $S$ is finite and $l: S\times S \to \mathbb{R}$ is bounded,
from Theorem \ref{koond-r} and (\ref{renewal}) it follows that
\begin{equation}\label{risk-koond}
R_1(Y^n,X^n)\to El(Y_0,V_0)={1\over
ET_1}E\Big(\sum_{t=S_0+1}^{S_1}l(Y_t,V_t)\Big)=:R_1\quad \text{a.s.
and in }L_1.\end{equation} We shall call the constant $R_1$ {\it
asymptotic Viterbi risk}. It depends only on the model $(Y,X)$ and
on the loss function $l$. For $l(s,s')=I_{\{s'\ne s\}}$, the actual
risk is the average number of mistakes made by the Viterbi
alignment:
\begin{equation}\label{emp-vead}
R_1(Y^n,X^n)={1\over n}\sum_{t=1}^n I_{\{Y_t\ne
\V_t^n\}},\end{equation} and the corresponding asymptotic risk is
the asymptotic misclassification probability $\P(Y_0\ne V_0)$.
\\
To our knowledge, the idea of considering the $R_1$-type limits for
the Viterbi alignment has been first mentioned in \cite{Caliebe2},
the convergence of the empirical risk is also stated in \cite{iowa}.
To show the convergence of $R_1(v,X_n)$, we use the following lemma
(see Theorem 9.4.8 in \cite{Chung}).
\begin{lemma}\label{lemma:ting-koond}
Let $X_n$ be bounded random variables such that $X_n\to 0$ almost
surely. Let $\{{\cal F}_n\}_{n=1}^{\infty}$ be a filtration. Then
$E[X_n|{\cal F}_n]\to 0$ almost surely.
\end{lemma}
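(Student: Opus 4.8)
The plan is to control $E[X_n\mid{\cal F}_n]$ by a single monotone dominating sequence and then invoke L\'evy's upward martingale convergence theorem. First I would pass to absolute values by the conditional Jensen inequality, $|E[X_n\mid{\cal F}_n]|\le E[|X_n|\mid{\cal F}_n]$, so that it suffices to prove $E[|X_n|\mid{\cal F}_n]\to 0$ almost surely. The key device is the tail supremum $Y_n:=\sup_{m\ge n}|X_m|$. Since $X_n\to 0$ a.s.\ and the $X_n$ are uniformly bounded, say by a constant $K$, the sequence $\{Y_n\}$ is nonincreasing, bounded by $K$, and satisfies $Y_n\downarrow 0$ almost surely. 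Because $|X_n|\le Y_n$, we have $0\le E[|X_n|\mid{\cal F}_n]\le E[Y_n\mid{\cal F}_n]$, and everything reduces to showing $E[Y_n\mid{\cal F}_n]\to 0$ a.s.

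The essential difficulty is that both the conditioned variable $Y_n$ and the conditioning $\sigma$-algebra ${\cal F}_n$ vary with $n$ at the same time; I would decouple the two indices using monotonicity. For a fixed $m$ and all $n\ge m$ we have $Y_n\le Y_m$, hence
\[
0\le E[Y_n\mid{\cal F}_n]\le E[Y_m\mid{\cal F}_n].
\]
Now $Y_m$ is a fixed integrable random variable, so L\'evy's upward martingale convergence theorem gives $E[Y_m\mid{\cal F}_n]\to E[Y_m\mid{\cal F}_{\infty}]$ almost surely as $n\to\infty$, where ${\cal F}_{\infty}:=\sigma(\bigcup_n{\cal F}_n)$. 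Taking $\limsup_n$ in the displayed inequality yields $\limsup_n E[Y_n\mid{\cal F}_n]\le E[Y_m\mid{\cal F}_{\infty}]$ a.s., and this bound holds for every fixed $m$.

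Finally I would let $m\to\infty$. Since $Y_m\downarrow 0$ with $0\le Y_m\le K$, conditional dominated convergence gives $E[Y_m\mid{\cal F}_{\infty}]\to 0$ almost surely. Consequently $\limsup_n E[Y_n\mid{\cal F}_n]\le 0$ a.s., and since these quantities are nonnegative we obtain $E[Y_n\mid{\cal F}_n]\to 0$, whence $E[X_n\mid{\cal F}_n]\to 0$ almost surely. The main obstacle is precisely the simultaneous dependence on $n$ in both slots of the conditional expectation; the monotone tail supremum $Y_n$ is what allows me to freeze the conditioned variable at $Y_m$ and bring in the martingale convergence theorem, after which the second passage $m\to\infty$ is routine.
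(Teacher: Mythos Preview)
Your argument is correct. The paper does not supply its own proof of this lemma; it simply cites Theorem~9.4.8 in Chung, and the proof you have written (tail supremum $Y_n=\sup_{m\ge n}|X_m|$, freeze at $Y_m$, apply L\'evy's upward theorem, then let $m\to\infty$ via conditional dominated convergence) is exactly the standard argument given there.
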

\noindent The following theorem is the first main result of this
paper. A similar result for the PMAP-alignment, namely the
convergence of $R_1(X^n)$ to a constant, is proved in \cite{PMAP}.
\begin{theorem}\label{main1} Let $\{(Y_t,X_t)\}_{t=1}^{\infty}$ be an ergodic
HMM satisfying {\bf A1} and {\bf A2}. Then there exists a constant
$R_1\geq 0$ such that the empirical risk and the risk of the Viterbi
alignment both converge to $R_1$ almost surely and in $L_1$:
\[ \label{kakskoond}
\lim_{n \to \infty} R_1(Y^n,X^n)=\lim_{n \to \infty}
R_1(v,X^n)=R_1\quad \text{a.s. and in } L_1. \]
Moreover, the expected risk of Viterbi alignments converges to $R_1$
as well: $ER_1(v,X^n)\to R_1$.\end{theorem}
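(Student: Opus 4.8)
The plan is to treat the two convergences separately, since the empirical risk $R_1(Y^n,X^n)$ and the risk $R_1(v,X^n)=E[R_1(Y^n,X^n)\mid X^n]$ are genuinely different objects: the former depends on both $Y^n$ and $X^n$, whereas the latter is a function of $X^n$ alone.

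First I would record the convergence of the empirical risk, which is essentially the content of (\ref{risk-koond}). To apply Theorem \ref{koond-r} I take $p=1$ and $g_1(X_t,Y_t,V_t):=l(Y_t,V_t)$, so that $\tilde U_i^n=l(Y_i,\V^n_i)$ and ${1\over n}\sum_{i=1}^n\tilde U_i^n=R_1(Y^n,X^n)$. Both integrability hypotheses are immediate from the boundedness of $l$: writing $L:=\max_{s,s'}l(s,s')$, one has $E|g_1(Z^*_1)|\le L<\infty$, and $M_p\le (S_{p+1}-S_p)L$, so $EM_p\le L\,E(S_2-S_1)<\infty$ by positive recurrence of the renewal process in {\bf R1}. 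Theorem \ref{koond-r} together with (\ref{renewal}) then gives $R_1(Y^n,X^n)\to El(Y_0,V_0)=R_1$ a.s. and in $L_1$, with $R_1\ge 0$ because $l\ge 0$.

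The core of the argument is the convergence of the risk $R_1(v,X^n)$. The difficulty is that this is a conditional expectation taken with respect to the growing $\sigma$-algebra ${\cal F}_n:=\sigma(X_1,\dots,X_n)$, so one cannot simply pass to the limit inside the conditional expectation; an ordinary conditional dominated convergence argument does not apply, because both the integrand and the conditioning field vary with $n$. This is precisely the setting of Lemma \ref{lemma:ting-koond}. I would set $\xi_n:=R_1(Y^n,X^n)-R_1$, which are uniformly bounded (by $2L$) and, by the first step, converge to $0$ almost surely. Since $R_1$ is constant and conditioning on $X^n$ coincides with conditioning on ${\cal F}_n$, the lemma applied to $\{\xi_n\}$ and $\{{\cal F}_n\}$ gives
\[
E[\xi_n\mid{\cal F}_n]=E[R_1(Y^n,X^n)\mid X^n]-R_1=R_1(v,X^n)-R_1\to 0\quad\text{a.s.},
\]
which is the desired almost sure convergence $R_1(v,X^n)\to R_1$.

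The remaining claims follow routinely. As the conditional expectation of a variable bounded by $L$, the risk $R_1(v,X^n)$ is itself bounded by $L$; combined with the almost sure convergence just obtained, the bounded convergence theorem yields $R_1(v,X^n)\to R_1$ in $L_1$, and in particular $ER_1(v,X^n)\to R_1$. (Equivalently, $ER_1(v,X^n)=ER_1(Y^n,X^n)\to R_1$ follows at once from the $L_1$-convergence established in the first step via the tower property.) The one genuinely delicate point---and the step I expect to demand the most care---is justifying the interchange of limit and conditional expectation over the expanding filtration; once Lemma \ref{lemma:ting-koond} is invoked, the rest rests only on the boundedness of $l$ and the regenerative/ergodic machinery already available.
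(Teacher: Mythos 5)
Your proposal is correct and follows essentially the same route as the paper: the empirical risk converges by the regenerative ergodic theorem (Theorem \ref{koond-r} together with (\ref{renewal})), and the conditional risk $R_1(v,X^n)$ is then handled by applying Lemma \ref{lemma:ting-koond} to the bounded sequence $R_1(Y^n,X^n)-R_1$ with the filtration $\sigma(X^n)$, with $L_1$-convergence and convergence of expectations following from boundedness. Your explicit verification of the hypotheses of Theorem \ref{koond-r} and your use of bounded convergence in place of Scheffe's theorem are only cosmetic differences.
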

\begin{proof} The convergence of the empirical risk is (\ref{risk-koond}). To show that $R_1(v,X^n)\to R_1$ a.s.,
apply Lemma \ref{lemma:ting-koond} with $X_n:=R_1(Y^n,X^n)-R_1$.
Clearly, $R_1(Y^n,X^n)-R_1$ is bounded and by (\ref{risk-koond}) it
goes to 0 a.s. Thus, by Lemma \ref{lemma:ting-koond},
$$|E[R_1(Y^n,X^n)-R_1|X^n]|=|E[R_1(Y^n,X^n)|X^n]-R_1|=|R_1(v,X^n)-R_1|\to
0\quad \text{a.s.}$$ By Scheffe's theorem, the convergence in $L_1$
follows by the non-negativity and boundedness of $R_1(v,X^n)$. The
convergence in $L_1$ implies the convergence of expected risks.
\end{proof}
\section{Convergence of $\R_1$-risk}\label{sec:logr1}
\noindent For the convergence of $\R_1$-risk we use Theorem
\ref{viterbi2}. Recall that the double-sided infinite alignment $v$
is a stationary coding. Consider the function $f: {\cal
X}^{\mathbb{Z}}\to (-\infty,0]$, where
$$f(x_{-\infty}^{\infty}):=\ln p_0\big(v(x_{-\infty}^{\infty}\big)_0|x_{-\infty}^{\infty})=\ln\P(Y_0=V_0|X_{-\infty}^{\infty}=x_{-\infty}^{\infty}).$$
In the following, let
$v_i(x_{-\infty}^{\infty}):=v(x_{-\infty}^{\infty})_i$ be the $i$-th
element of the infinite alignment. Note that for every
$t=1,2,\ldots$,
\begin{align*}
f\big(\theta_t(x_{-\infty}^{\infty}) \big)&=\ln
p_0\big(v_0(\theta_t(x_{-\infty}^{\infty}))\big|\theta_t(x_{-\infty}^{\infty})\big)
=\ln p_t\big(v_0(\theta_t(x_{-\infty}^{\infty}))\big|x_{-\infty}^{\infty}\big)\\
&=\ln
p_t\big(v_t(x_{-\infty}^{\infty})|x_{-\infty}^{\infty}\big)=\ln
\P(Y_t=V_t|X_{-\infty}^{\infty}=x_{-\infty}^{\infty}).
\end{align*}
Thus, by Birkhoff's ergodic theorem, there exists a constant $\R_1$
such that
\begin{equation}\label{r1stats}
-{1\over n}\sum_{t=1}^n\ln\P(Y_t=V_t|X_{-\infty}^{\infty})\to
-E\big(\ln\P(Y_0=V_0|X_{-\infty}^{\infty})\big)=:\R_1\quad
\text{a.s. and in  }L_1,\end{equation} provided the expectation is
finite.
The main idea for proving the convergence of $\R_1(v,X^n)$ is the
following. Consider without loss of generality a double-sided HMM
$\{(Y_t,X_t)\}_{t=-\infty}^{\infty}$. Then by {\bf RD2},
$\V_t^n=V_t$ for every $S_0\leq t \leq S_{k(n)}$, where
$k(n)=\max\{k\geq 0: S_k+m\leq n\}$ and $\{S_t\}_{t\geq 0}$ is the
renewal process as in Theorem \ref{viterbi2}. Thus,
$$-\frac{1}{n}\sum_{t=1}^n\ln\P(Y_t=\V^n_t|X^n)=-\frac{1}{n} \sum_{t=1}^{S_0-1}\ln\P(Y_t=\V^n_t|X^n)
-\frac{1}{n}\sum_{t=S_0}^{S_{k(n)}}\ln\P(Y_t=V_t|X^n)$$
\begin{equation} \label{R1partition}
-\frac{1}{n}\sum_{t=S_{k(n)}+1}^{n}\ln\P(Y_t=\V^n_t|X^n).
\end{equation}
The first term in the partition above converges to zero almost
surely. We will prove that the second term converges to $\R_1$
almost surely and that the third term converges to zero almost
surely. To prove the convergence of the second term, we need some
auxiliary results.
Let $C$ be the cluster as in {\bf A1} and let ${\cal X}_o$ be the
corresponding set. The proof of the following proposition is given
in Appendix.
%
%
\begin{proposition}\label{propvorr} Let $x_{-\infty}^{\infty}\in {\cal
X}^{\mathbb{Z}}$ be such that for some $u,v\in \mathbb{N}$,
$x_{-u}^{-u+r}\in {\cal X}_o^{r+1}$, $x_{v-r}^{v}\in {\cal
X}_o^{r+1}$ and for every $s\in S$, $\lim_n
p_0(s|x_{-n}^n)=p_0(s|x_{-\infty}^{\infty})$. Let
$v_0=v_0(x_{-\infty}^{\infty})$. Then there exist constants $c>0$
and $0<B<\infty$ that are independent of data such that
\begin{equation}\label{vaikeu}
p_0\big(v_0|x_{-\infty}^{\infty}\big)\geq
c\exp[-B(u+v)].\end{equation}
\end{proposition}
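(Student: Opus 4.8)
The plan is to prove the inequality first for the finite-horizon smoothing probabilities $p_0(v_0\mid x_{-n}^n)$ with constants independent of $n$, and then pass to the limit using the hypothesis $p_0(s\mid x_{-n}^n)\to p_0(s\mid x_{-\infty}^{\infty})$. Fix $n$ with $-n<-u$ and $n>v$; for $n$ large the consistently chosen finite Viterbi alignment agrees with $v$ at the origin, so its value there is $v_0$. Assume $u,v>r$ (the boundary cases, where a block reaches the origin and hence pins $Y_0\in C$, are easier). Put $A:=-u+r$ and $B:=v-r$, so the two $\X$-blocks are $[A-r,A]$ and $[B,B+r]$; since these lie in $\X$, every path of positive weight has $Y_A,Y_B\in C$. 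Conditioning on $Y_A=p$ and $Y_B=q$ and using the HMM conditional-independence structure factorizes the posterior as
\[
p_0(v_0\mid x_{-n}^n)=\frac{\sum_{p,q\in C}\ell(p)\,m_0(p,q)\,\varrho(q)}{\sum_{p,q\in C}\ell(p)\,m(p,q)\,\varrho(q)},
\]
where $\ell(p)=\P(X_{-n}^A=x_{-n}^A,Y_A=p)$ and $\varrho(q)=\P(X_{B+1}^n=x_{B+1}^n\mid Y_B=q)$ collect the two outer blocks, $m(p,q)$ is the inner bridge weight on $[A,B]$, and $m_0(p,q)$ is the same bridge constrained to $Y_0=v_0$.

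The middle emissions are arbitrary, so forcing a path through $v_0$ at time $0$ may cost an uncontrolled factor $f_{v_0}(x_0)$; the whole point is that this cost is absorbed because $v_0$ lies on the global Viterbi path. Let $p^*,q^*$ be the values of the finite Viterbi path at $A,B$. I would bound the numerator below by its single $(p^*,q^*)$ term, and note that by Bellman optimality the Viterbi path restricted to $[A,B]$ is the maximal-weight bridge from $p^*$ to $q^*$ and passes through $v_0$; hence $m_0(p^*,q^*)\ge\widehat W_{\mathrm{mid}}$, the maximal $p^*\!\to\!q^*$ bridge weight. The denominator I would bound above by replacing each inner sum over paths by (number of paths)$\times$(maximal single-path weight), giving $\sum_{p,q}m(p,q)\le|C|^2|S|^{\,B-A}\widehat W^{\mathrm{free}}_{\mathrm{mid}}$, where $\widehat W^{\mathrm{free}}_{\mathrm{mid}}$ is the unconstrained maximal bridge weight on $[A,B]$. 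As $B-A=u+v-2r$, this crude path-counting already produces the factor $\exp[-(u+v)\ln|S|]$, which is exactly the decay we are allowed.

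The step I expect to be the main obstacle is the cancellation of the two outer factors together with the comparison of $\widehat W^{\mathrm{free}}_{\mathrm{mid}}$ with the constrained weight, since a single $\X$-block of length $r+1$ seems to leave no reset room inside $[A,B]$. Both are supplied by the cluster reset from {\bf A1}: as $R=(P(i,j))_{i,j\in C}$ has strictly positive $r$th power and the emission densities on $\X$ are pinched between $\e$ and $M$, rerouting the last $r$ steps of any left segment (and the first $r$ steps of any right segment) inside its $\X$-block changes the cluster endpoint at only bounded multiplicative cost. Applied to the summed outer factors this yields $\ell(p)\le\kappa_1\ell(p^*)$ and $\varrho(q)\le\kappa_2\varrho(q^*)$ for all $p,q\in C$, so the outer factors cancel against the single numerator term. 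Applied instead to the maximal outer weights $\ell^{\max},\varrho^{\max}$ and combined with the Bellman identity
\[
\widehat W_{\mathrm{full}}=\ell^{\max}(p^*)\,\widehat W_{\mathrm{mid}}\,\varrho^{\max}(q^*)\ge\ell^{\max}(\tilde p)\,\widehat W^{\mathrm{free}}_{\mathrm{mid}}\,\varrho^{\max}(\tilde q),
\]
where $\tilde p,\tilde q$ are the endpoints of the free optimal bridge and $\widehat W_{\mathrm{full}}$ is the global Viterbi weight, it yields $\widehat W^{\mathrm{free}}_{\mathrm{mid}}\le\kappa_1'\kappa_2'\widehat W_{\mathrm{mid}}\le\kappa_1'\kappa_2'm_0(p^*,q^*)$. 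Substituting everything, the factor $m_0(p^*,q^*)$ cancels and one is left with $p_0(v_0\mid x_{-n}^n)\ge c\,|S|^{-(u+v)}$, i.e.\ the claim with $B=\ln|S|$ and $c$ depending only on $r,\e,M,|S|,|C|$ and the least positive transition within $C$. The conceptual content of the obstacle is that the middle comparison needs no reset of its own: it follows from Viterbi optimality together with the same flattening already used on the outer factors.
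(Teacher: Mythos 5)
Your proof is correct, and it shares the paper's three essential ingredients: the two $\X$-blocks force the hidden states at the block positions into the cluster $C$ and, via the primitivity of $R$ together with the $\e$--$M$ pinching of the emission densities on $\X$, make every ratio of left (resp.\ right) partial likelihoods over $C$-valued endpoints uniformly bounded; crude path counting over the unblocked middle produces the admissible factor $|S|^{-(u+v)}$; and Bellman optimality of the Viterbi path absorbs the uncontrollable emission $f_{v_0}(x_0)$. The arrangement is genuinely different, though. The paper splits at the outer block endpoints $-u$ and $v$, anchors the middle maxima at the alignment states $a=v_{-u}$ and $b=v_v$, uses the \emph{local} Bellman swap $\sigma(x_{-u+1}^{-1},v_0|a)f_{v_0}(x_0)\sigma(x_1^{v-1},b|v_0)\ge \sigma(x_{-u+1}^{-1},i_o|a)f_{i_o}(x_0)\sigma(x_1^{v-1},b|i_o)$ to bound the ratio $p_0(v_0|x_{-n}^n)/p_0(i_o|x_{-n}^n)$ from below for every $i_o$, and finishes with the observation that some $i_o$ carries posterior mass at least $|S|^{-1}$. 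You instead split at the inner block endpoints, flatten the summed forward/backward factors of the outer segments rather than the middle maxima, and invoke Bellman \emph{globally} through a splice inequality to show that the unconstrained maximal middle bridge exceeds the $v_0$-constrained one by at most a bounded factor; this bounds numerator against denominator directly and dispenses with the auxiliary state $i_o$, at the price of the extra free-versus-constrained comparison, which you correctly identify as the crux and resolve. Both routes yield the same exponent $\ln|S|$ in the statement. One small point to tighten: instead of appealing to the finite Viterbi alignment agreeing with $v$ at the origin for large $n$ (an almost-sure statement about random realizations, not a consequence of the hypotheses imposed on the fixed sequence $x_{-\infty}^{\infty}$), use directly the local optimality of the infinite alignment on $[-u,v]$ supplied by its piecewise construction --- this is exactly the form of Bellman's principle the paper's own proof invokes.
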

%
%
%
%
%
%
\noindent The proof of Proposition \ref{propvorr} reveals that it
holds also for a finite sequence of observations $x^n$. Moreover,
the following corollary holds.
\begin{corollary}\label{corrn} Let $x^n\in {\cal
X}^n$ be such that for some $w < n-r$, $x^{w+r}_w \in {\cal
X}_o^{r+1}$. Let  $\tilde{v}_t=v_t^n(x^n)$. Then there exist $c>0$
and $0<D<\infty$ such that for every $t$, $w<t\le n$,
\begin{equation}\label{vaiken}
p_t(\tilde{v}_t|x^n)\geq
    c\exp[-D(n-w)].
\end{equation}
\end{corollary}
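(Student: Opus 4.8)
The plan is to re-run the argument of Proposition \ref{propvorr} in the finite, one-sided setting, obtaining the bound by a time shift together with the observation that the right end $n$ of the sample can play the role of the right-hand barrier. Concretely, fix $t$ with $w<t\le n$ and shift time so that $t$ takes the place of the index $0$ in Proposition \ref{propvorr}. The left barrier $x_w^{w+r}\in {\cal X}_o^{r+1}$ then supplies the left-hand barrier at distance $u:=t-w$, while the sample simply terminates at $n$, which supplies the right-hand boundary at distance $v:=n-t$ (in the finite problem there are no observations beyond $n$, so the terminal backward weight is trivial and the right end is an even stronger substitute for a barrier). Since $u+v=(t-w)+(n-t)=n-w$ does not depend on $t$, the estimate $p_t(\tilde v_t|x^n)\ge c\exp[-B(u+v)]$ produced by that argument becomes exactly $c\exp[-D(n-w)]$ with $D=B$, uniformly over all $t\in(w,n]$.

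Recall next the mechanism that produces such a bound. I would first pass to the conditional probability of following the Viterbi path on the whole tail, writing $p_t(\tilde v_t|x^n)=\P(Y_t=\tilde v_t|X^n=x^n)\ge \P(Y_w^n=\tilde v_w^n|X^n=x^n)$, which is legitimate because $\{Y_w^n=\tilde v_w^n\}\subseteq\{Y_t=\tilde v_t\}$. Factoring the numerator at the index $w$ turns it into a forward weight $\alpha_w(\tilde v_w):=\P(Y_w=\tilde v_w,X^w=x^w)$ times the tail product $\Pi:=\prod_{s=w+1}^{n}P(\tilde v_{s-1},\tilde v_s)f_{\tilde v_s}(x_s)$. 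By the dynamic-programming optimality principle, $\Pi$ is the maximal tail likelihood issuing from $\tilde v_w$, so it dominates the full backward weight $\beta_w(\tilde v_w)$ up to the combinatorial factor $|S|^{-(n-w)}$, namely $\Pi\ge |S|^{-(n-w)}\beta_w(\tilde v_w)$; this yields $\P(Y_w^n=\tilde v_w^n|X^n)\ge |S|^{-(n-w)}\,p_w(\tilde v_w|x^n)$ and is precisely the source of the exponential $e^{-D(n-w)}$.

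The anchoring at the barrier is where assumption {\bf A1} enters. On the block $[w,w+r]$ every positive-probability path is forced into the cluster $C$ (property (iii) of ${\cal X}_o$), and the emission densities there are pinned to the interval $(\e,M)$ by properties (i)--(ii). Combined with the primitivity $R^r>0$, this lets one ``reset'' the chain across the block: the forward weights over $C$ become mutually comparable up to a constant depending only on $\e,M,r$ and on the primitivity of $R$, and in particular the posterior weight carried by the Viterbi value $\tilde v_w$ is bounded below by a data-independent constant, i.e.~$p_w(\tilde v_w|x^n)\ge c>0$. Combining this with the tail estimate of the previous paragraph gives $p_t(\tilde v_t|x^n)\ge c\,|S|^{-(n-w)}$, the asserted inequality with $D=\ln|S|$.

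The main obstacle is the control of the emission densities, which are in general unbounded: in the max-versus-sum comparison on the segment the individual densities $f_{y_s}(x_s)$ do not cancel, since competing paths select different densities, so one cannot simply bound them by $M$. The argument must therefore rest on the optimality of the Viterbi continuation and on the ${\cal X}_o$-bounds at the barrier rather than on any global bound on the densities; this is exactly the technical content already carried out in the appendix proof of Proposition \ref{propvorr}, and the finite one-barrier version needed here follows by the shift-and-boundary reduction described above.
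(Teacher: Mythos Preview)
Your opening and closing paragraphs correctly identify the strategy---re-run the Proposition~\ref{propvorr} argument with a single left barrier at $[w,w+r]$ and let the sample endpoint $n$ replace the right barrier---and this is precisely what the paper does, arriving at $D=\ln|S|$.

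The explicit argument in your second and third paragraphs, however, has a gap. The chain $p_t(\tilde v_t\mid x^n)\ge \P(Y_w^n=\tilde v_w^n\mid X^n)\ge |S|^{-(n-w)}\,p_w(\tilde v_w\mid x^n)$ is correct, but the claimed lower bound $p_w(\tilde v_w\mid x^n)\ge c$ is not justified by what you wrote. The barrier $x_w^{w+r}\in{\cal X}_o^{r+1}$ lies to the \emph{right} of $w$; it makes the backward weights $\beta_w(s)$ and the max-continuations $\sigma(x_{w+1}^n\mid s)$ mutually comparable over $s\in C$, but it says nothing about the forward weights $\alpha_w(s)$, which are determined by the unconstrained data $x_1^{w-1}$. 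After the comparable backward factors are absorbed, one is left needing $\alpha_w(\tilde v_w)\ge c'\sum_{s\in C}\alpha_w(s)$. Viterbi optimality at $w$ controls only the \emph{maximal} forward score at $\tilde v_w$, not the sum $\alpha_w(\tilde v_w)$ relative to the other $\alpha_w(s)$; passing from the maximum to the sum costs a factor of order $|S|^{-(w-1)}$, which destroys the bound.

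The paper sidesteps this entirely: it never attempts to bound $p_w(\tilde v_w\mid x^n)$. It works instead with the ratio $\gamma_t(x^n,\tilde v_t)/\gamma_t(x^n,i_o)$ at the target time $t$, invoking Bellman's principle at $t$ (not at $w$) to eliminate $f_{i_o}(x_t)$. In that ratio the forward factor $p(x^w)$ appears identically in numerator and denominator and cancels, so no control of $\alpha_w$ is ever needed; what remains are $\sigma$-ratios on $[w+1,t-1]$, which are bounded via (\ref{ratio1}) thanks to the barrier.
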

\noindent The proof of Corollary \ref{corrn} follows the one of
Proposition \ref{propvorr} and is sketched in Appendix.
%
%
%
\begin{lemma}\label{lemma:kesk}
There exists $\alpha>0$ such that for every $t\in \mathbb{Z}$,
\begin{equation}\label{kesk2}
E\Big({1\over \P(Y_t=V_t|X_{-\infty}^{\infty})}\Big)^{\alpha}<\infty
\, .
\end{equation}
\end{lemma}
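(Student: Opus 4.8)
By stationarity of the HMM together with the stationary coding property {\bf RD4}, the random variable $\P(Y_t=V_t|X_{-\infty}^{\infty})$ has the same distribution for every $t\in\mathbb{Z}$, so it suffices to establish (\ref{kesk2}) for $t=0$. The plan is to bound the reciprocal of $\P(Y_0=V_0|X_{-\infty}^{\infty})$ from above using Proposition \ref{propvorr} and then show that the resulting bound has a finite moment of some positive order. To this end, define the backward and forward waiting times
\[
U:=\min\{u\in\mathbb{N}:X_{-u}^{-u+r}\in\X^{r+1}\},\qquad
V:=\min\{v\in\mathbb{N}:X_{v-r}^{v}\in\X^{r+1}\}.
\]
By ergodicity the pattern $X_j^{j+r}\in\X^{r+1}$ recurs infinitely often in both directions, so $U$ and $V$ are a.s.\ finite. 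Moreover, by the a.s.\ existence of the smoothing limits $\lim_n p_0(s|x_{-n}^n)=p_0(s|x_{-\infty}^{\infty})$ (Subsection \ref{subsec:exp}), for almost every realization the hypotheses of Proposition \ref{propvorr} hold with $u=U$ and $v=V$. Hence, with the data-independent constants $c,B$ from that proposition,
\[
\frac{1}{\P(Y_0=V_0|X_{-\infty}^{\infty})}\le c^{-1}\exp\big[B(U+V)\big]\quad\text{a.s.}
\]

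Given this bound, it remains to find $\alpha>0$ with $E\exp[\alpha B(U+V)]<\infty$. Since $U$ and $V$ are not independent, I would apply the Cauchy--Schwarz inequality,
\[
E\exp[\alpha B(U+V)]\le\big(E\exp[2\alpha B U]\big)^{1/2}\big(E\exp[2\alpha B V]\big)^{1/2},
\]
reducing the problem to showing that $U$ and $V$ each possess a finite exponential moment of some order. This is equivalent to establishing that the tails $\P(U>n)$ and $\P(V>n)$ decay geometrically; any $\alpha$ small enough that $2\alpha B$ stays below the corresponding exponential rate then works, and this $\alpha$ is independent of $t$, giving (\ref{kesk2}).

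The core of the argument is therefore the geometric tail bound for the waiting times, which I would prove by a blocking argument. Fix a window length $L\ge r+1$ chosen large enough that, from any state, the chain $Y$ can with positive probability reach the cluster $C$ and then traverse $r+1$ consecutive states inside $C$; such an $L$ exists because $Y$ is finite and irreducible and, by {\bf A1}, the restriction $R$ is primitive. Conditionally on this event, the probability that the emitted observations all fall in $\X$ is at least $(\e\,\mu(\X))^{r+1}>0$ by property (i) of $\X$ and the conditional independence of the $X_t$ given $Y$. Combining these two bounds yields a constant $\delta>0$ such that, conditionally on the past up to the start of any block of length $L$, the pattern $X_j^{j+r}\in\X^{r+1}$ occurs somewhere in that block with probability at least $\delta$. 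Consequently $\P(V>nL)\le(1-\delta)^n$, the desired geometric decay; the bound for $U$ follows identically after passing to the time-reversed chain, which is again a finite irreducible aperiodic Markov chain with the same cluster $C$, the same set $\X$, and a cluster restriction that is primitive precisely when $R$ is. The main obstacle is this last step: securing the \emph{uniform} lower bound $\delta$, since it requires simultaneously controlling the Markov dynamics (staying inside the cluster for $r+1$ steps, where primitivity of $R$ is essential) and the emission probabilities of landing in $\X$, uniformly over all starting states and over the conditioning on the past.
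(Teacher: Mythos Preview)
Your proposal is correct and follows essentially the same approach as the paper. The paper uses the stopping times $W_0$ and $U_0$ from (\ref{UtVt}) in place of your $V$ and $-U$, derives the same pointwise bound $\P(Y_0=V_0\mid X_{-\infty}^{\infty})\ge c\exp[-B(W_0-U_0)]$ from Proposition~\ref{propvorr}, and applies Cauchy--Schwarz exactly as you do; the only difference is that the paper cites \cite{iowa} for the geometric tail bound $\P(W_0>k)\le a e^{-bk}$ rather than supplying the blocking argument you sketch.
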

\begin{proof}
Let $W_0$ and $U_0$ be the stopping times defined in (\ref{UtVt}).
%
%
Because for every $s\in S$,
$\lim_n\P(Y_0=s|X_{-n}^n)=\P(Y_0=s|X_{-\infty}^{\infty})$ almost
surely, from (\ref{vaikeu}) it follows that
\begin{equation}\label{suurU}
\P(Y_0=V_0|X_{-\infty}^{\infty})\geq c\exp[-B(W_0-U_0)]\quad {\rm
a.s.}
\end{equation}
It holds that for some positive constants $a$ and $b$ and for every
$k=1,2,\ldots$,
$$\P(W_0>k)\leq a\exp(-bk),$$
see, e.g.~\cite{iowa}. This inequality implies that for $\alpha>0$
small enough, $E(e^{\alpha W_0})<\infty$. Analogously, for
sufficiently small $\alpha>0$, $E\big(e^{\alpha
(-U_0)}\big)<\infty$. Thus, by the Cauchy-Schwartz inequality it
holds that for sufficiently small $\alpha$,
\begin{equation}\label{summa}
E\big(e^{\alpha(W_0-U_0)}\big)=E\big(e^{\alpha W_0}e^{\alpha
(-U_0)}\big)\leq \Big(E\big(e^{2\alpha W_0}\big)E\big(e^{2\alpha
(-U_0)}\big)\Big)^{1\over 2}<\infty.\end{equation} The inequalities
(\ref{suurU}) and (\ref{summa}) imply (\ref{kesk2}) for $t=0$. By
the stationarity of $(X,Y)$, (\ref{kesk2}) holds for arbitrary
$t$.\end{proof}
\\\\
\noindent
Recall the inequalities (\ref{ineqrho1}) -- (\ref{ineqrho2}).
Unfortunately these bounds do not immediately hold for the
logarithms. The following lemma uses the inequality $|\ln a -\ln
b|\leq {1\over \min \{a,b\}}|a-b|$, provided that $a,b>0$.
\begin{lemma} \label{R1lemma} Suppose that for an $\alpha>0$,
\begin{equation}\label{kesk}
E\Big({1\over \P(Y_0=V_0|X_{-\infty}^{\infty})}\Big)^{\alpha}<\infty
\, .
\end{equation}
Then
\begin{equation}\label{koondr1}
\lim_{n\to \infty}\,-{1\over
n}\sum_{t=1}^{S_{k(n)}}\ln\P(Y_t=V_t|X^n) = \R_1 \quad {\rm a.s.}
\end{equation}
\end{lemma}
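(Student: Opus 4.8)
The plan is to compare the finite-conditioning sum in (\ref{koondr1}) with the stationary full-conditioning average in (\ref{r1stats}), which is already known to converge to $\R_1$, and to show that the two modifications it introduces --- changing the upper summation limit from $n$ to $S_{k(n)}$, and replacing the conditioning $X_{-\infty}^{\infty}$ by $X^n$ --- each contribute nothing in the limit. First I would record that $\R_1$ is finite: writing $p:=\P(Y_0=V_0|X_{-\infty}^{\infty})\in(0,1]$, the elementary bound $0\le-\ln p\le\alpha^{-1}p^{-\alpha}$ together with the hypothesis (\ref{kesk}) gives $E|\ln\P(Y_0=V_0|X_{-\infty}^{\infty})|<\infty$. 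Hence (\ref{r1stats}) applies, so that $-\frac1M\sum_{t=1}^{M}\ln\P(Y_t=V_t|X_{-\infty}^{\infty})\to\R_1$ almost surely, and by a.s. convergence of this sequence the same limit holds along every integer sequence $M\to\infty$.

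\textbf{Step 1 (reduce the summation limit).} By the renewal SLLN $S_k/k\to ET_1$, and since $S_{k(n)}\le n-m$ while $S_{k(n)+1}>n-m$ with $S_{k(n)+1}/S_{k(n)}\to1$, one obtains $S_{k(n)}/n\to1$ almost surely. Writing the full-conditioning sum up to $S_{k(n)}$ as $\frac{S_{k(n)}}{n}\cdot\big(-\frac{1}{S_{k(n)}}\sum_{t=1}^{S_{k(n)}}\ln\P(Y_t=V_t|X_{-\infty}^{\infty})\big)$ and applying the convergence above along the random index $M=S_{k(n)}\to\infty$, I conclude that $-\frac1n\sum_{t=1}^{S_{k(n)}}\ln\P(Y_t=V_t|X_{-\infty}^{\infty})\to\R_1$ almost surely.

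\textbf{Step 2 (change the conditioning).} It then remains to prove that
\[
\Delta_n:=\frac1n\sum_{t=1}^{S_{k(n)}}\big|\ln\P(Y_t=V_t|X^n)-\ln\P(Y_t=V_t|X_{-\infty}^{\infty})\big|\to0\quad\text{a.s.}
\]
I would apply the inequality $|\ln a-\ln b|\le|a-b|/\min\{a,b\}$ term by term. The numerator $|a-b|$ is at most the total variation distance, which by the triangle inequality through $\P(Y_t\in\cdot|X_1^{\infty})$ and the exponential forgetting bounds (\ref{ineqrho1})--(\ref{ineqrho2}) is at most $C(\rho^{t}+\rho^{n-t})$, with $C$ an a.s. finite random variable and $\rho\in(0,1)$. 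For the denominator I would lower bound $\min\{a,b\}$ using the two available estimates: Lemma \ref{lemma:kesk} controls $1/\P(Y_t=V_t|X_{-\infty}^{\infty})$ (uniformly in $t$ by stationarity), while Corollary \ref{corrn} lower bounds $\P(Y_t=\V_t^n|X^n)$ in terms of the distance from $t$ to a cluster barrier $\X^{r+1}$ lying below $t$; here one uses $\V_t^n=V_t$ for $S_0\le t\le S_{k(n)}$ from \textbf{RD2}, and that \textbf{RD3} places a barrier within $O(m)$ of each renewal time.

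\textbf{Main obstacle.} The geometric weights $\rho^{t}$ and $\rho^{n-t}$ concentrate the mass of $\Delta_n$ at the two ends of the summation range --- exactly where the forgetting error is not negligible and the smoothing probabilities can be atypically small --- so the denominator cannot be controlled uniformly by a single estimate. I would split $\Delta_n$ into a lower-end part (driven by $\rho^{t}$), an upper-end part (driven by $\rho^{n-t}$), and a negligible bulk. For the lower end, the moment hypothesis (\ref{kesk}) yields, via Borel--Cantelli, that $1/\P(Y_t=V_t|X_{-\infty}^{\infty})$ grows subexponentially a.s., so $\sum_t\rho^{t}/\P(Y_t=V_t|X_{-\infty}^{\infty})<\infty$ and dividing by $n$ kills this part; the finitely many smallest-$t$ terms are treated separately, each $\P(Y_t=V_t|X_{-\infty}^{\infty})$ being a.s. positive. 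For the upper end, where $n-t\ge n-S_{k(n)}$, I would instead invoke Corollary \ref{corrn} with a barrier at distance $O(\log n)$ below $t$, using that $n-S_{k(n)}$ and the local inter-renewal gaps are $O(\log n)$ a.s. (the exponential tail of the renewal times already exploited in the proof of Lemma \ref{lemma:kesk}), so that $\P(Y_t=\V_t^n|X^n)$ is only polynomially small while $\rho^{n-t}$ decays; balancing these exponents is the delicate point. Combining the three parts gives $\Delta_n\to0$, and together with Step 1 this yields (\ref{koondr1}).
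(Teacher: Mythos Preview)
Your Step 1 and the ``lower end'' part of Step 2 match the paper closely: the paper also uses $E|\ln\xi_t|<\infty$ from (\ref{kesk}), the Borel--Cantelli bound $\xi_t>t^{-\beta}$ eventually, and the inequality $|\ln a-\ln b|\le|a-b|/\min\{a,b\}$ combined with (\ref{ineqrho1})--(\ref{ineqrho2}). The paper, however, makes the split explicit and \emph{deterministic}: it cuts at $m=n-(\ln n)^2$, so that for $t\le m$ one has $\rho^{n-t}\le\rho^{(\ln n)^2}$, which is super-polynomially small and therefore automatically beaten by the polynomial lower bound $1/(4t^{\beta})$ on $\min\{\eta_t,\eta_t^n\}$. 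This removes any need to ``balance exponents'' in that range.

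The genuine divergence --- and the gap in your sketch --- is the upper end. You propose to keep using the ratio bound $|\ln a-\ln b|\le|a-b|/\min\{a,b\}$ there, lower-bounding $\eta_t^n$ by Corollary~\ref{corrn} as $c\exp[-D(n-w)]$ with $n-w=O(\log n)$. But this only gives $\eta_t^n\ge c\,n^{-D'}$ with $D'$ depending on $D=\ln|S|$ \emph{and} on the (random, model-dependent) constant hidden in the $O(\log n)$ coming from the renewal-gap tails; nothing forces $D'<1$. Since for $t$ near $S_{k(n)}$ the weight $\rho^{n-t}$ is merely $\rho^{n-S_{k(n)}}$, which does not tend to $0$, the resulting bound $\tfrac{1}{n}\sum_{\text{upper}}\rho^{n-t}/\eta_t^n$ can be of order $n^{D'-1}$ and need not vanish. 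Your phrase ``balancing these exponents is the delicate point'' flags exactly this, but does not resolve it.

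The paper sidesteps the difficulty by \emph{not} comparing $\eta_t^n$ with $\xi_t$ on the tail block at all. Instead it observes that for $S_1\le t\le S_{k(n)}$ the stopping times $U_t,W_t$ and the alignment $V_t$ are $X^n$-measurable (by {\bf RD3} and {\bf RD2}), so one may take conditional expectations in the pointwise bound of Proposition~\ref{propvorr} to obtain directly
\[
\eta_t^n=\P(Y_t=V_t\mid X^n)=E\big[\P(Y_t=V_t\mid X_{-\infty}^{\infty})\,\big|\,X^n\big]\ge c\exp[-B(W_t-U_t)],
\]
hence $\P(-\ln\eta_t^n>k)\le a e^{-bk}$ uniformly in $t$. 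A union bound over the $(\ln n)^2$ indices in $(m,S_{k(n)}]$ and Borel--Cantelli then give $-\tfrac1n\sum_{t>m}\ln\eta_t^n\to0$ with no exponent-balancing required. If you want to repair your route without this conditional-expectation device, the clean fix is to abandon the ratio bound on the upper block and use the crude estimate $|\ln\eta_t^n-\ln\xi_t|\le|\ln\eta_t^n|+|\ln\xi_t|$, bounding each summand by $O(\log n)$ via Corollary~\ref{corrn} and the eventual bound $\xi_t\ge t^{-\beta}$; over $O((\log n)^2)$ indices this gives $O((\log n)^3)/n\to0$.
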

\begin{proof}
Let $\xi_t:=\P(Y_t=V_t|X_{-\infty}^{\infty})$,
$\eta^n_t:=\P(Y_t=V_t|X^n)$, $\eta_t:=\P(Y_t=V_t|X_{1}^{\infty})$
and let $\beta={1\over \alpha}$.
Take $m=n-(\ln n)^2$. Split the sum in (\ref{koondr1}) as
\[ -\frac{1}{n} \sum_{t=1}^{S_{k(n)}} \ln{\eta^n_t} =-\frac{1}{n} \sum_{t=1}^m \ln{\eta^n_t} -\frac{1}{n} \sum_{t=m+1}^{S_{k(n)}} \ln{\eta^n_t} =Term_I+Term_{II}\, . \]
We will prove that $Term_I$ converges to $\R_1$ and $Term_{II}$ to
zero almost surely.

{$\mathbf{Term_I}$}. Recall that $\{\xi_t\}$ is a stationary ergodic
process. The assumption (\ref{kesk}) ensures that $E|\ln
\xi_0|<\infty$.  Hence, by assumption,
$$\sum_{t=1}^{\infty}\P(\xi_t\leq {1\over t^{\beta}})=\sum_{t=1}^{\infty}\P(\xi^{-\alpha}_t
\geq {t})\leq E(\xi^{-\alpha}_t)+1<\infty\, .$$ Thus, the sequence
$\xi_t$, $t=1,2,\ldots $, satisfies $\P(\xi_t>{1\over
t^{\beta}}\quad {\rm ev})=1$. From (\ref{ineqrho1}) it follows that
$\P(\eta_t>{1\over 2t^{\beta}}\quad {\rm ev})=1$. Thus, almost
surely $|\ln \eta_t-\ln\xi_t|\leq C2t^{\beta}\rho^t$ eventually.
Since $-{1\over n}\sum_{t=1}^n\ln \xi_t\to \R_1$ almost surely, we
now have
\begin{equation}\label{lim1}
-{1\over n}\sum_{t=1}^n\ln \eta_t\to \R_1 \quad {\rm a.s.}
\end{equation}
%
%
\noindent Let (random) $T$ be so big that $\eta_t>{1\over
2t^{\beta}}$ when $t\geq T$. Observe that for $n$ large enough it
holds that
$$ -{\ln (4C)\over \ln \rho}-{\beta \over \ln \rho}\ln t \leq (\ln n)^2 \, .$$
Therefore, for large $n$ and $t$ such that $T<t\le n-(\ln n)^2$, we
have $C\rho^{(n-t)}\leq {1\over 4 t^{\beta}}$.
By (\ref{ineqrho2}), $|\eta^n_t-\eta_t|\leq C\rho^{n-t}$ almost
surely. Hence, for $n$ large enough and $t$ such that $T<t \le
n-(\ln n)^2$, $\min\{\eta_t,\eta_t^n\}\geq {1\over 4t^{\beta}}$ and
$|\ln \eta_t^n-\ln \eta_t|\leq (4t^{\beta}C)\rho^{n-t}$. Thus, as
$n\to \infty$,
\begin{align*}
\left|{1\over m}\sum_{t=1}^m \ln\eta_t^n-{1\over m}\sum_{t=1}^m
\ln\eta_t\right|&
\leq {1\over m}\sum_{t=1}^T |\ln\eta_t^n-\ln\eta_t|+{1\over
m}\sum_{t=T+1}^m (4t^{\beta}C)\rho^{n-t} \\ &\leq {1\over m}
\sum_{t=1}^T |\ln\eta_t^n-\ln\eta_t|+{4Cn\over m}
n^{\beta}\rho^{(\ln n)^2}\to 0 \quad {\rm a.s.}
\end{align*}
Since $m/n \to 1$, it follows from (\ref{lim1}) that $-{1\over
n}\sum_{t=1}^m \ln\eta_t^n \to \R_1$ almost surely.

{$\mathbf{Term_{II}}$}. It remains to prove that
\begin{equation}\label{kyssa}
-{1\over n}\sum_{t=m+1}^{S_{k(n)}} \ln \eta_t^n\to 0 \quad
\text{a.s.} \end{equation}
By Proposition \ref{propvorr}, $\P(Y_t=V_t|X_{-\infty}^{\infty})\geq
c\exp[-B(W_t-U_t)],$ where $U_t$ and $W_t$ are the stopping times
defined as in (\ref{UtVt}). Observe that when $S_1\leq t\leq
S_{k(n)}$, then according to {\bf RD3}, $U_t>0$ and $W_t\le
S_{k(n)}+m\le n$. Therefore, $U_t$ and $W_t$ are $X^n$-measurable
and for $S_1\leq t\leq S_{k(n)}$,
\[
E\left[\P(Y_t=V_t|X_{-\infty}^{\infty})|X^n\right]=\P(Y_t=V_t|X^n)\geq
c E\left[\exp[-B(W_t-U_t)] |X^n \right] = c\exp[-B(W_t-U_t)]\, .
\]
Thus for any $k$, $\P(-\ln \eta_t^n>k)\leq \P(B(W_t-U_t)>k+\ln
c)\leq a\exp[-bk],$ where the last inequality follows from
\cite{iowa}. Here $a$ and $b$ are positive constants. Since
\begin{align*}
\P\Big( -{1\over n}\sum_{t=m+1}^{S_k(n)}\ln \eta_t^n>\epsilon
\Big)&= \P\Big( \sum_{t=m+1}^{S_k(n)}-\ln \eta_t^n>n\epsilon\Big)
\leq \sum_{t=m+1}^{S_{k(n)}}\P\Big(-\ln \eta_t^n>{n\epsilon
\over(\ln
n)^2}\Big)\\
&\leq (\ln n)^2a\exp\Big[-b{ n\e\over(\ln n)^2}\Big]\end{align*} and
$$\sum_n (\ln n)^2a\exp\Big[-b{ n\e\over(\ln n)^2}\Big]<\infty,$$
the convergence in (\ref{kyssa}) follows by the Borel-Cantelli
lemma.
\end{proof}
%
%

\noindent We are now ready to prove the convergence of
$\R_1(v,X^n)$.
%
%
\begin{theorem}\label{main2} Let $\{(Y_t,X_t)\}_{t=1}^{\infty}$ be an ergodic
HMM satisfying {\bf A1} and {\bf A2}. Then there exists a constant
$\R_1$ such that
\[ \lim_{n\to \infty}\R_1(v,X^n)=\lim_{n\to \infty}\, -{1\over
n}\sum_{t=1}^n\ln\P(Y_t=\V^n_t|X^n) = \R_1 \quad {\rm a.s.}\,\,
\text{and in}\,\, L_1. \]
\end{theorem}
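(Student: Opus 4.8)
The plan is to reduce the statement to the building blocks already assembled in Lemma~\ref{R1lemma} and the partition~(\ref{R1partition}), and then to transfer the almost-sure convergence from the ``true-state'' quantity $-\frac{1}{n}\sum_t\ln\P(Y_t=\V^n_t|X^n)$ to the risk $\R_1(v,X^n)$ via a conditioning argument analogous to the one used for $R_1$ in Theorem~\ref{main1}.

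First I would establish the a.s.\ convergence of the expression $-\frac{1}{n}\sum_{t=1}^n\ln\P(Y_t=\V_t^n|X^n)$ to $\R_1$ by handling the three terms of the decomposition~(\ref{R1partition}). The middle term $-\frac{1}{n}\sum_{t=S_0}^{S_{k(n)}}\ln\P(Y_t=V_t|X^n)$ is exactly what Lemma~\ref{R1lemma} (combined with Lemma~\ref{lemma:kesk}, which supplies the moment hypothesis~(\ref{kesk})) shows converges to $\R_1$ almost surely. The first term, a sum of at most $S_0-1$ bounded-index contributions divided by $n$, vanishes a.s.\ because $S_0$ is a.s.\ finite and the summands are controlled by Corollary~\ref{corrn}; the number of terms is fixed while $n\to\infty$. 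The third term, $-\frac{1}{n}\sum_{t=S_{k(n)}+1}^n\ln\P(Y_t=\V_t^n|X^n)$, is a boundary term over a block of length $n-S_{k(n)}$, which is $O_{\mathrm{a.s.}}(1)$ since the inter-renewal gaps have finite mean; I would bound each $-\ln\P(Y_t=\V_t^n|X^n)$ using Corollary~\ref{corrn} and then apply a Borel--Cantelli argument like the one already carried out for $Term_{II}$ in Lemma~\ref{R1lemma}, so that the whole block contributes $o(1)$ after division by $n$.

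Next I would pass from this to $\R_1(v,X^n)=-\frac{1}{n}\sum_{t=1}^n E[\,\ln\P(Y_t=\V_t^n|X^n)\mid X^n]$—wait, more precisely $\R_1(v,X^n)=-\frac{1}{n}\sum_t\ln p_t(\V_t^n|X^n)$ is already $X^n$-measurable, so in fact the quantity $-\frac{1}{n}\sum_t\ln\P(Y_t=\V_t^n|X^n)$ \emph{is} $\R_1(v,X^n)$ by definition. Thus the two displayed limits in the theorem coincide termwise and the a.s.\ convergence established above gives both equalities at once; no separate conditioning step of the Lemma~\ref{lemma:ting-koond} type is needed here (unlike in Theorem~\ref{main1}, where the empirical and conditional risks were genuinely different random variables).

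Finally, for the $L_1$-convergence I would argue that $\R_1(v,X^n)$ is nonnegative and that the sequence is uniformly integrable. Uniform integrability would follow from the uniform tail bound on $-\ln\eta_t^n$ derived inside Lemma~\ref{R1lemma} (the estimate $\P(-\ln\eta_t^n>k)\le a\exp[-bk]$, valid uniformly in $t$ within the relevant range), which yields a uniform bound on, say, $E[(\R_1(v,X^n))^{1+\delta}]$ for small $\delta>0$; combined with a.s.\ convergence this gives $L_1$-convergence and $E\R_1(v,X^n)\to\R_1$. The main obstacle I expect is the third (boundary) term: controlling $-\ln\P(Y_t=\V_t^n|X^n)$ for $t$ near the right endpoint $n$, where the exponential-forgetting bound~(\ref{ineqrho2}) degrades, so I must lean on the data-independent lower bound of Corollary~\ref{corrn} together with the exponential control of the last inter-renewal gap, exactly as in the $Term_{II}$ computation, to be sure the contribution is negligible after normalization.
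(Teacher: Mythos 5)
Your proposal follows the paper's proof essentially verbatim: the same partition (\ref{R1partition}), Lemma \ref{R1lemma} (fed by Lemma \ref{lemma:kesk}) for the middle block, the observation that $\R_1(v,X^n)$ is by definition the displayed sum so no conditioning step is needed, and Corollary \ref{corrn} combined with the moments of the inter-renewal gaps and Borel--Cantelli for the right-hand boundary block, which is exactly the paper's $M_k$ argument. Two small remarks: for the first block the correct justification is that each of the finitely many summands converges to a finite limit (Corollary \ref{corrn} alone only gives a bound of order $n-w$, which does not vanish after division by $n$), and your uniform-integrability sketch for the $L_1$ part actually supplies a detail the paper's own proof leaves implicit.
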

%
%
%
\begin{proof}
Consider the partition in ($\ref{R1partition}$). By Lemma
\ref{R1lemma}, the second term in (\ref{R1partition}) converges to
$\R_1$ almost surely. Thus, it suffices to prove that
\begin{equation}\label{lopp2}
{1\over n}\sum_{t=S_{k(n)}+1}^{n}\ln\P(Y_t=\V^n_t|X^n)\to 0\quad
{\rm a.s.}
\end{equation}
For every $k\geq 0$, let
$$M_k=\max_{S_k<n\leq S_{k+1}}|\ln
\P(Y_{S_k+1}=\V^n_{S_k+1}|X^n)+\cdots+\ln\P(Y_n=\V^n_{n}|X^n)|.$$
Because of {\bf R1}, for $S_k<n\le S_{k+1}$ and for $i$ such that
$S_k<S_k+i\le n$,
\[ \P(Y_{S_k+i}=\V^n_{S_k+i}|X^n)= \P(Y_{S_k+i}=\V^n_{S_k+i}|X_{S_k}^n)\, .\]
Therefore the random variables $M_k$ are i.i.d. As in the proof of
Theorem \ref{koond-r}, for (\ref{lopp2}) it suffices to show that
$EM_k<\infty$ for every $k\geq 0$, because then (\ref{lopp2})
follows due to the Borel-Cantelli lemma.
We shall consider $S_1$. The construction of  $S_k$ implies that
for every $k$, the observations $X_{S_k-m},\ldots,X_{S_k-m+r}$
belong to ${\cal X}_o$ (see \cite{IEEE}). Recall that we are
considering the case $n\leq S_2$. Hence, for every $t$ such that
$S_1<t\leq n$, by (\ref{vaiken}),
$$|\ln \P(Y_t=\V_t^n|X^n)|\leq D(n-S_1+m) +|\ln c| \leq D(S_2-S_1+m)+|\ln c|,$$
implying that $|M_1|\leq D(S_2-S_1+m)^2+(S_2-S_1) |\ln c|.$ The
renewal times  $S_2-S_1$ have all moments (see \cite{iowa, AVT4}),
hence $EM_1<\infty$.
\end{proof}
\\\\
\noindent {\bf Remark.} Note that the approach of the present
section can be easily applied to prove the convergence of the
$R_1$-risk:
 $R_1(v,X^n)\to R_1$ a.s. Indeed,
 the counterpart of (\ref{r1stats})  is
\[ \label{r11stats}
{1\over n}\sum_{t=1}^n\P(Y_t=V_t|X_{-\infty}^{\infty})\to
E\big(\P(Y_0=V_0|X_{-\infty}^{\infty})\big)=:1-R_1\quad \text{a.s.
and in }L_1.\] The inequalities (\ref{ineqrho1}) and
(\ref{ineqrho2}) immediately imply $$\lim_{n\to \infty}{1\over
n}\sum_{t=1}^n\P(Y_t=V_t|X^n) = 1-R_1 \quad {\rm a.s.},$$ and since
the probabilities are bounded, the convergence
$$R_1(v,X^n)=1-{1\over n}\sum_{t=1}^n\P(Y_t=\V_t^n|X^n)\to R_1 \quad {\rm a.s.}$$ now
easily follows. \\\\
From the remark above it is clear that the difficulties with the
$\R_1$-risk are due to unboundedness of $\ln \P(Y_t=\V_t^n|X^n)$,
since in principle $\P(Y_t=\V_t^n|X^n)$ can be arbitrarily small.
However, the latter is not so when instead of the Viterbi alignment
the PMAP-alignment is used. Then $\max_s\P(Y_t=s|X^n)\geq |S|^{-1}$.
By Birkhoff's theorem,
\begin{equation}\label{birkpmam}
-{1\over n}\sum_{t=1}^n\max_{s \in S}
\ln\P(Y_t=s|X_{-\infty}^{\infty})\to \R_1^*\quad \text{ a.s.  and
in  }L_1\, ,
\end{equation}
where $\R_1^*$ is a constant. The inequalities (\ref{ineqrho1}) and
(\ref{ineqrho2}) imply that
 $$|\max_s\ln\P(Y_t=s|X^n)-\max_s\ln\P(Y_t=s|X_{-\infty}^{\infty})|\leq {C |S|}(\rho^t+\rho^{n-t}) \quad \text{a.s.}$$
Thus, the convergence (\ref{birkpmam}) implies the convergence
\begin{equation}\label{koondpmap}
\R_1(X^n)=-{1\over n}\sum_{t=1}^n \max_{s\in S}\ln \P(Y_t=s|X^n)\to
\R_1^* \quad \text{ a.s.  and  in  }L_1.
\end{equation}
 Hence, the following corollary holds.
\begin{corollary}\label{maincorr}
There exists a constant $\R_1^*$ such that (\ref{koondpmap})
holds.\end{corollary}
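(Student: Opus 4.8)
The plan is to follow the roadmap already sketched in the remark preceding the corollary, which reduces the claim to two ingredients: an application of Birkhoff's ergodic theorem to the infinite-horizon smoothing probabilities, and the exponential forgetting inequalities (\ref{ineqrho1})--(\ref{ineqrho2}) to pass from conditioning on $X_{-\infty}^{\infty}$ to conditioning on the finite window $X^n$. The decisive structural fact, which distinguishes this case from the $\R_1$-risk of the Viterbi alignment, is that the PMAP summand is bounded: since the maximum of $|S|$ probabilities summing to one is at least $|S|^{-1}$, we have $\max_{s}\P(Y_t=s|X^n)\in[|S|^{-1},1]$, so each term $\max_s\ln\P(Y_t=s|X^n)$ lies in $[-\ln|S|,0]$.

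First I would establish (\ref{birkpmam}). The double-sided process $\{(X_t,Y_t)\}$ is stationary and ergodic, and by stationarity the map $x_{-\infty}^{\infty}\mapsto\max_{s}\ln\P(Y_0=s|X_{-\infty}^{\infty})$ pulls back along the shift to $\max_s\ln\P(Y_t=s|X_{-\infty}^{\infty})$. This function is bounded, lying in $[-\ln|S|,0]$, hence integrable, so Birkhoff's ergodic theorem yields the a.s.\ and $L_1$ convergence of the averages to the constant $\R_1^*:=-E[\max_s\ln\P(Y_0=s|X_{-\infty}^{\infty})]$.

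The main work is the second step: controlling the gap between the finite-window and infinite-window versions. Combining (\ref{ineqrho1}) and (\ref{ineqrho2}) by the triangle inequality for the total variation distance gives $\|\P(Y_t\in\cdot|X^n)-\P(Y_t\in\cdot|X_{-\infty}^{\infty})\|\leq C(\rho^t+\rho^{n-t})$ a.s. Since $|\max_s a_s-\max_s b_s|\leq\max_s|a_s-b_s|$ and both maxima are bounded below by $|S|^{-1}$, the elementary inequality $|\ln a-\ln b|\leq|a-b|/\min\{a,b\}$ upgrades this to $|\max_s\ln\P(Y_t=s|X^n)-\max_s\ln\P(Y_t=s|X_{-\infty}^{\infty})|\leq C|S|(\rho^t+\rho^{n-t})$ a.s. Averaging over $t=1,\ldots,n$ and using $\sum_{t=1}^n(\rho^t+\rho^{n-t})\leq(1+\rho)/(1-\rho)$, the Ces\`aro average of this bound is of order $C|S|/n\to0$ a.s.\ (as $C<\infty$ a.s.). Hence the difference between $\R_1(X^n)$ and the infinite-window average in (\ref{birkpmam}) vanishes a.s., and (\ref{koondpmap}) follows. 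The $L_1$ convergence is then immediate from the uniform boundedness $\R_1(X^n)\in[0,\ln|S|]$ by dominated convergence (or Scheff\'e's lemma).

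The only delicate point I anticipate is making the passage from total variation to the logarithm of the maximum fully rigorous: one must verify that the lower bound $|S|^{-1}$ applies simultaneously to both conditional laws so that $\min\{\cdot,\cdot\}\geq|S|^{-1}$ may be used in the log-Lipschitz estimate, and that the coordinatewise control $|\P(Y_t=s|X^n)-\P(Y_t=s|X_{-\infty}^{\infty})|\leq\|\cdot\|$ holds uniformly in $s$ so that the bound on $\max_s|a_s-b_s|$ is genuinely the total variation distance. Once this bound is in hand, the remainder is entirely routine.
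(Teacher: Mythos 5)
Your proposal is correct and follows essentially the same route as the paper, whose proof is precisely the remark preceding the corollary: Birkhoff's theorem applied to the bounded summand $\max_s\ln\P(Y_0=s|X_{-\infty}^{\infty})\in[-\ln|S|,0]$ gives (\ref{birkpmam}), and the exponential forgetting bounds (\ref{ineqrho1})--(\ref{ineqrho2}) together with the lower bound $\max_s\P(Y_t=s|\cdot)\geq|S|^{-1}$ and the log-Lipschitz estimate yield the summable error $C|S|(\rho^t+\rho^{n-t})$, whose Ces\`aro average vanishes. You merely spell out the details (triangle inequality, $|\max_s a_s-\max_s b_s|\leq\max_s|a_s-b_s|$, Scheff\'e for the $L_1$ part) that the paper leaves implicit.
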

\section{Convergence of $\R_{\infty}$-risk}\label{sec:loglike}
\noindent Recall that $\R_{\infty}(X^n)=-{1\over n}\ln
\P(Y^n=\V^n|X^n)$ and $\V^n=v^n(X^n)$. Let $p(x^n)$ be the
likelihood of $x^n$ and let $p(x^n|s^n)$ denote the conditional
likelihood of observing $x^n$ given that $\{Y^n=s^n\}$. Note that
$\ln p(x^n|s^n)$ can be expressed as
\begin{equation}\label{ln}
\ln p(x^n|s^n)=\sum_{t=1}^n\ln f_{s_t}(x_t)=\sum_{t=1}^n\ln
f_{1}(x_t)I_1(s_t)+\cdots + \sum_{t=1}^n\ln
f_{|S|}(x_t)I_{|S|}(s_t).
\end{equation}
To prove the convergence of $\R_{\infty}(X^n)$, write
$\P(Y^n=\V^n|X^n)$ as
$$\P(Y^n=\V^n|X^n)={p(X^n|\V^n)\P(Y^n=\V^n)\over p(X^n)}.$$
Then
\begin{equation} \label{split-likelihood}
\R_{\infty}(X^n)=-{1\over n}\Big(\ln p(X^n|\V^n)+\ln
\P(Y^n=\V^n)-\ln p(X^n) \Big).\end{equation}
Before stating the theorem about the convergence of
$\R_{\infty}(X^n)$, we introduce the conditional measure
$Q_s:=\P(X_0\in \cdot|V_0=s)$, $s\in S$. As it follows from Theorem
\ref{koond-r}, the measure $Q_s$ is the almost sure limit of the
empirical measure corresponding to the Viterbi alignment state $s$,
i.e.~for every Borel set $A$,
\[\label{q-moot}
{\sum_{t=1}^n I_{A\times s}(X_t,\V^n_t)\over \sum_{t=1}^n
I_{s}(\V^n_t)}\to Q_s(A)\quad \text{a.s.}
\]
This convergence is the basis of the adjusted Viterbi training
introduced in \cite{AVT1,AVT4}. Note that for every $Q_s$-integrable
$g$,
\begin{equation}\label{g}
E\big(g(X_0)I_s(V_0)\big)=E\big(g(X_0)|V_0=s\big)\P(V_0=s)=m_s\!\!\int
\!\! g(x)Q_s(dx),
\end{equation}
where $m_s:=\P(V_0=s)$.
\begin{theorem}\label{main3} Let for every $s\in S$ the logarithm of the conditional density $f_s$ be $P_s$-integrable. Then
\[\label{likelihood2}
-\R_{\infty}(X^n)\to \sum_{s\in S} m_s \!\!\int \!\! \ln
f_s(x)Q_s(dx)+ E[\ln p_{V^*_1 V^*_2}]+H_X=: -\R_{\infty}\quad
\rm{a.s}\text{ and in }L_1,\]
where $H_X$ is the entropy rate of $X$ and
$p_{ij}=\P(Y_2=j|Y_1=i)$.\end{theorem}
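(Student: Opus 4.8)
The plan is to work from the decomposition (\ref{split-likelihood}),
\[
\R_{\infty}(X^n)=-{1\over n}\Big(\ln p(X^n|\V^n)+\ln \P(Y^n=\V^n)-\ln p(X^n)\Big),
\]
and show that each of the three averages converges almost surely and in $L_1$ to the corresponding summand in the claimed limit. The last term is the easiest: $-{1\over n}\ln p(X^n)$ is the empirical entropy of the observation process, and since $X$ is ergodic, the Shannon--McMillan--Breiman theorem gives $-{1\over n}\ln p(X^n)\to H_X$ almost surely and in $L_1$. This is why $H_X$ appears as the entropy rate of $X$.

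\textbf{First} I would treat the conditional likelihood term. Using (\ref{ln}), one writes
\[
{1\over n}\ln p(X^n|\V^n)={1\over n}\sum_{t=1}^n\ln f_{\V_t^n}(X_t),
\]
which is a sum of the form handled by Theorem \ref{koond-r} with $p=1$ and $g_1(x,y,v)=\ln f_v(x)$ evaluated along $Z=(X,Y,V)$. The limit that theorem produces is $E[\ln f_{V_0}(X_0)]=E\big(\sum_{s\in S}\ln f_s(X_0)I_s(V_0)\big)$, and applying the identity (\ref{g}) with $g=\ln f_s$ turns this into $\sum_{s\in S}m_s\int\ln f_s(x)\,Q_s(dx)$, exactly the first summand. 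The hypothesis that $\ln f_s$ is $P_s$-integrable for each $s$ is what I would use to verify the integrability condition $E|g_1(Z_1^*)|<\infty$ required by Theorem \ref{koond-r}; the condition $EM_1<\infty$ should follow from the moment bounds on the renewal increments together with the integrability of $\ln f_s$, though this is the point where one must be careful, since $\ln f_s$ is a priori unbounded and the block-maximum $M_1$ couples the summands over a renewal block.

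\textbf{Next} comes the transition-probability term. Here $\ln\P(Y^n=\V^n)=\sum_{t=2}^n\ln p_{\V_{t-1}^n\V_t^n}+\ln\P(Y_1=\V_1^n)$, so after dividing by $n$ the boundary term vanishes and we are left with an average of $\ln p_{\V_{t-1}^n\V_t^n}$. This is again a Theorem \ref{koond-r} average, now with window $p=2$ and $g_2$ reading off the consecutive Viterbi states, giving the limit $E[\ln p_{V_1V_2}]=E[\ln p_{V_1^*V_2^*}]$, which is the middle summand. The integrability is unproblematic here because $S$ is finite and one may assume (or it follows from aperiodicity/primitivity via \textbf{A1}) that the relevant $p_{ij}$ along the Viterbi path are positive, so $\ln p_{V_1^*V_2^*}$ is bounded.

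\textbf{The main obstacle} I anticipate is controlling the conditional-likelihood average when $\ln f_s$ is unbounded: verifying the hypothesis $EM_1<\infty$ of Theorem \ref{koond-r} requires bounding $E\max_{S_0<n\le S_1}\big|\sum_{t=S_0+1}^n\ln f_{V_t}(X_t)\big|$, which mixes the heavy-tailed renewal block length with the possibly-unbounded emission log-densities. I would handle this by crudely bounding the block sum by $(S_1-S_0)\max_t|\ln f_{V_t}(X_t)|$ over the block and invoking the fact (Theorem \ref{viterbi}, \cite{iowa,AVT4}) that the renewal increments $S_1-S_0$ have all moments, combined with the $P_s$-integrability to control the emission terms via a Cauchy--Schwarz or Hölder splitting analogous to (\ref{summa}) in Lemma \ref{lemma:kesk}. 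Once all three averages converge almost surely and in $L_1$, summing them with the signs in (\ref{split-likelihood}) yields $-\R_{\infty}(X^n)\to\sum_{s\in S}m_s\int\ln f_s(x)\,Q_s(dx)+E[\ln p_{V_1^*V_2^*}]+H_X$, which is the assertion.
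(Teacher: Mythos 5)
Your proposal is correct in outline and follows the paper's proof essentially step for step: the same decomposition (\ref{split-likelihood}), Theorem \ref{koond-r} applied to the emission--log-density and transition--log-probability averages (the positivity of $p_{\V^n_t\V^n_{t+1}}$ coming from the Viterbi path having positive posterior probability), and the Shannon--McMillan--Breiman theorem for the $-\frac{1}{n}\ln p(X^n)$ term. Two details are worth flagging against the paper's version: (i) the integrability condition $E|g_1(Z_1^*)|<\infty$ is an integral against the measures $Q_s$, not $P_s$, and the paper closes this gap by citing the result of \cite{AVT3} that $P_s$-integrability of $\ln f_s$ implies its $Q_s$-integrability --- a nontrivial step your sketch elides; and (ii) your proposed verification of $EM_1<\infty$ via the bound $(S_1-S_0)\max_t|\ln f_{V_t}(X_t)|$ followed by Cauchy--Schwarz or H\"older would require second (or higher) moments of $\ln f_s$, which the hypothesis does not provide, whereas the bound that works under mere first-moment assumptions is $M_1\le\sum_{t=S_1+1}^{S_2}\max_{s}|\ln f_s(X_t)|I_{\{f_s(X_t)>0\}}$ combined with the renewal-reward identity (\ref{renewal}).
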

\begin{proof}
Consider (\ref{split-likelihood}). To prove the convergence of the
first term of the RHS, apply (\ref{ln}) to the Viterbi alignment. In
\cite{AVT3} it was shown that if $\ln f_s$ is $P_s$-integrable, then
$\ln f_s$ is also $Q_s$-integrable for every $s$. Then by Theorem
\ref{koond-r} and (\ref{g}), for every state $s\in S$
\[\label{f}
{1\over n}\sum_{t=1}^n \ln f_s(X_t)I_s(\V^n_t)\to E\big( \ln
f_s(X_0)I_s(V_0)\big)=m_s \!\!\int \!\! \ln f_s(x)Q_s(dx)\quad
\rm{a.s.}\text{ and in } L_1.\]
This together with (\ref{ln}) gives
\[\label{likelihood1}
{1\over n}\ln p(X^n|Y^n=v^n(X^n))\to \sum_{s\in S} m_s \!\!\int \!\!
\ln f_s(x)Q_s(dx)\quad \text{a.s.}\text{ and in } L_1.\]
For the second term use the Markov property
\begin{align*}
\ln \P(Y^n=\V^n)&=\ln \pi_{\V^n_1}+\ln p_{\V^n_1 \V^n_2 }+\cdots +
\ln p_{\V^n_{n-1} \V^n_n},\end{align*} where  $\pi_s=\P(Y_1=s)$.
Since $\V^n$ is a path with positive likelihood,
$p_{\V^n_t,\V^n_{t+1}}>0$ almost surely for every $t$. Because the
number of states is finite, there exists a constant $M>0$ such that
for every $i$, $-\ln p_{\V^n_i \V^n_{i+1}}< M$ almost surely. Hence
the assumptions of Theorem \ref{koond-r} hold and, with $p_{\V^n_0
\V^n_1}=\pi_{\V^n_1}$, we get
\begin{align*}
{1\over n}\ln \P(Y^n=\V^n)={1\over n}\sum_{t=0}^{n-1} \ln p_{\V^n_t
\V^n_{t+1}}\to E[\ln p_{V^*_1 V^*_2}] \quad \text{a.s and in }L_1,
\end{align*}
where $E[\ln p_{V^*_1 V^*_2}]=\sum_{i,j\in S}\ln
p_{ij}\P(V^*_1=i,V^*_2=j)$. Finally, the Shannon-McMillan-Breiman
theorem implies the convergence of the third term of the RHS in
(\ref{split-likelihood}):
$${1\over n}\ln p(X^n)\to -H_X\quad \text{a.s. and in }L_1.$$
\end{proof}
\\\\
\noindent {\bf Remark.} Note that $-E[\ln p_{Y_1 Y_2}]$ is the
entropy rate of $Y$.  By the same argument,
\[ \label{markov-log}
{1\over n}\ln \P(Y^n|X^n)\to \sum_{s \in S} \pi_s\int \ln
f_s(x)P_s(dx)-H_Y+H_X=: -\R^Y_{\infty}\quad \text{a.s.
 and  in   }L_1,\]
where $H_Y$ is the entropy rate of $Y$. The convergence in $L_1$
implies
$$-{1\over n}E[\ln \P(Y^n|X^n)]\to \R^Y_{\infty},$$
where the expectation is taken over $X^n$ and $Y^n$. Since $E[\ln
\P(Y^n|X^n)]=H(Y^n|X^n)$ (the conditional entropy of $Y^n$ given
$X^n$), the limit $\R^Y_{\infty}$ could be interpreted as the
conditional entropy rate of $Y$ given $X$, it is not the entropy
rate of $Y$. Clearly, $\R_{\infty}\leq \R^Y_{\infty}$,
and the difference of those two numbers shows how much the Viterbi
alignment "overestimates" the likelihood.
\appendix
\section{Proofs of Theorem \ref{koond-r}, Proposition
\ref{propvorr} and Corollary \ref{corrn}}
%
\subsection{Proof of Theorem \ref{koond-r}}
\begin{proof} Partition the sum in (\ref{asmussen:sum}) as
$${1\over n-p+1}\sum_{i=p}^n \tilde{U}^n_i={1\over
n-p+1} \Big( \sum_{i=p}^{S_{k(n)}}{U}_i+\sum_{i=S_{k(n)}+1}^n
\tilde{U}^n_i\Big).$$ Since
 $S_{k(n)}\nearrow  \infty$ almost surely, from (\ref{birkhoff2}) we know that
\begin{equation}\label{u-koond1}
{1\over S_{k(n)}}\sum_{i=p}^{S_{k(n)}} {U}_i \to Eg_p(Z^*_1,\ldots,
Z^*_p) \quad \text{a.s. and in }L_1.
\end{equation}
Since $ET_1<\infty$ and $n\geq p$, by SLLN and the elementary
renewal theorem
$${S_{k(n)}\over n-p+1}={S_{k(n)}\over k(n)}{k(n)\over n-p+1}\to 1 \quad \text{a.s. and in }L_1.$$
Combining this with (\ref{u-koond1}) and taking into account that
the sequence $\{{S_{k(n)}\over n-p+1}\}$ is bounded, we obtain that
$$
{1\over n-p+1}\sum_{i=p}^{S_{k(n)}}{U}_i\to Eg_p(Z^*_1,\ldots,
Z^*_p)\quad \text{a.s. and in}\, L_1.$$ Note that
$$
\Big|{1\over n-p+1} \sum_{i=S_{k(n)}+1}^n \tilde{U}^n_i\Big|\leq
{M_{k(n)}\over {S_{k(n)}+1-p}}\leq {M_{k(n)}\over {{k(n)}-p+1}}.$$
%
%
%
Since the random variables $M_k$, $k\geq p$, are indentically
distributed, it holds for every $\epsilon>0$ that
$$\sum_{k=p}^{\infty}\P\Big({M_k\over k}>\epsilon\Big )=
\sum_{k=p}^{\infty}\P\Big ({M_p\over \epsilon}>k \Big)\leq
{EM_p\over \epsilon}<\infty\, .$$ Thus, by the Borel-Cantelli lemma
${M_k\over k}\to 0$ almost surely as $k\to \infty$. Clearly,
$E\left[{M_k\over k}\right]\to 0$, so by Scheffe's theorem
${M_k\over k}\to 0$ in $L_1$ as well.
\end{proof}
\subsection{Preliminaries for proving Proposition \ref{propvorr} and Corollary \ref{corrn}}
\noindent Let us start with some notation. For every sequence of
observations $x_k^l=(x_k,\ldots,x_l)\in {\cal X}^{l-k+1}$, for every
sequence of states $y_k^l=(y_k,\ldots,y_l)\in {S}^{l-k+1}$ and
states $i,j\in S$, we denote by $p(x_k^l,y_k^l,j|i)$ the following
conditional likelihood:
$$p(x_k^l,y_k^l,j|i):=P(i,y_k)\prod_{u=k}^{l-1}P(y_u,y_{u+1})P(y_l,j)\prod_{u=k}^lf_{y_u}(x_u).$$
Similarly,
$$p(x_k^l,y_k^l|i):=\sum_jp(x_k^l,y_k^l,j|i), \quad p(x_k^l,y_k^l):=\sum_ip(x_k^l,y_k^l|i)\pi(i).$$
We also define
$$\alpha(x_k^l,s):=\sum_{y_k^l\in S^{k-l+1}:y_l=s}p(x_k^l,y_k^l),\quad
\beta(x_k^l|i)=\sum_{y_k^l\in S^{k-l+1}}p(x_k^l,y_k^l|i).$$ The last
two notations are standard in the HMM literature, see e.g.
\cite{HMP,HMMraamat}. Let
$$\beta(x_k^l,s|i)=\sum_{y_k^l \in S^{k-l+1}: y_l=s}p(x_k^l,y_k^l|i),\quad  \alpha(s,x_k^l):=\sum_{y_k^l\in S^{k-l+1}:y_k=s}p(x_k^l,y_k^l).$$
Finally, let
$$\sigma(x_k^l,j|i):=\max_{y_k^l}p(x_k^l,y_k^l,j|i),\quad \sigma(x_k^l|i):=\max_{y_k^l}p(x_k^l,y_k^l|i).$$
%
%
%
Let $C$ be the cluster as in {\bf A1}. Thus, there is an $r\geq 1$
such that the matrix  $R^r$ has positive entries. Let ${\cal X}_o$
be the corresponding set. Suppose $z^{r}\in {\cal X}_o^{r}$ and
$y^{r}\in C^{r}$. By the definition of $\X$,  it holds that
$${\e}^{r}\leq \big(\prod_{u=1}^{r}f_{y_u}(z_u)\big)\leq M^{r}.$$
By the cluster assumption,  $ 0<\min_{i,j\in C}R^r(i,j)\leq
\big(P(i,y_1)P(y_1,y_2)\ldots P(y_{r-1},j)\big)\leq 1,$ provided
$i,j\in C$. Hence there exist constants $0<a<A<\infty$, not
depending on the observations, such that
\begin{equation}\label{aA}
a<p(x^r,y^r|i)<A\quad \text{and}\quad
a<p(x^{r-1},y^{r-1},j|i)<A,\quad j\in C.
\end{equation}
Suppose now $x^m$, $m>r$, is a sequence of observations such that
the first $r$ elements belong to the set $\X$, i.e.~$x^r\in \X^r$.
Then for every $i$, $p(x^m,y^m|i)>0$ only if $y^r\in C^r$, implying
that
$$\sigma(x^m,j|i)=\max_{s\in C}\max_{y^r\in C^r:
y_r=s}p(x^r,y^r|i)\sigma(x_{r+1}^m,j|s).$$ Let now $i_1,i_2\in C$.
Then for some states $s_1,s_2\in C$,
\begin{align*}
\sigma(x^m,j|i_1)&=\max_{y^r\in C^r:
y_r=s_1}p(x^r,y^r|i_1)\sigma(x_{r+1}^m,j|s_1),\\
\sigma(x^m,j|i_2)&=\max_{y^r\in C^r:
y_r=s_2}p(x^r,y^r|i_2)\sigma(x_{r+1}^m,j|s_2)\geq \max_{y^r\in C^r:
y_r=s_1}p(x^r,y^r|i_2)\sigma(x_{r+1}^m,j|s_1).\end{align*} Hence,
the inequalities (\ref{aA}) imply that for every state $j$
\begin{equation}\label{ratio1}
{\sigma(x^m,j|i_1)\over \sigma(x^m,j|i_2)}\leq {\max_{y^r\in C^r:
y_r=s_1}p(x^r,y^r|i_1)\over \max_{y^r\in C^r:
y_r=s_1}p(x^r,y^r|i_2)}\leq {A\over a}.\end{equation}
Similarly, if $x^m$ is such that the last $r$ elements belong to
${\cal X}_o$, i.e. $x_{m-r+1}^m\in {\cal X}^r$, then for arbitrary
states $j_1,j_2\in C$ there exist $s_1,s_2\in C$ such that
\begin{align*}
\sigma(x^m,j_1|i)&=\max_{y^{m-r+1}:
y_{m-r+1}=s_1}p(x^{m-r+1},y^{m-r+1}|i)\sigma(x_{m-r+2}^m,j_1|s_1),\\
\sigma(x^m,j_2|i)&=\max_{y^{m-r+1}:
y_{m-r+1}=s_2}p(x^{m-r+1},y^{m-r+1}|i)\sigma(x_{m-r+2}^m,j_2|s_2)\\
&\geq \max_{y^{m-r+1}:
y_{m-r+1}=s_1}p(x^{m-r+1},y^{m-r+1}|i)\sigma(x_{m-r+2}^m,j_2|s_1).
\end{align*}
So from (\ref{aA}) it follows that
\begin{equation}\label{ratio2}
{\sigma(x^m,j_1|i)\over \sigma(x^m,j_2|i)}\leq
{\sigma(x_{m-r+2}^m,j_1|s_1)\over  \sigma(x_{m-r+2}^m,j_2|s_1)}\leq
{A\over a}.\end{equation}
%
\subsection*{Proof of Proposition \ref{propvorr}}
%
\begin{proof} Let $x_{-\infty}^{\infty}$ be a sequence of observations and
let $x_{-n}^n$ be its subword. For every state $i\in S$, we are
interested in probability
$p_0(i|x_{-n}^n):=\P(Y_0=i|X_{-n}^n=x_{-n}^n)$. Note that
$$p_0(i|x_{-n}^n)p(x_{-n}^n)=\sum_{y_{-n}^n:y_0=i}p(x_{-n}^n,y_{-n}^n)=:\gamma_0(x_{-n}^n,i).$$
Observe that for every $u,v\in \{1,\ldots,n-1\}$ and for an
arbitrary state, let it be 1,
\begin{align*}
\gamma_0(x_{-n}^n,i) &=\sum_{s_1\in S}\sum_{s_2\in S}\sum_{s_3\in
S}\sum_{s_4\in
S}\alpha(x_{-n}^{-u},s_1)\beta(x_{-u+1}^{-1},s_2|s_1)P(s_2,1)f_1(x_0)\beta(x_{1}^{v-1},s_3|1)P(s_3,s_4)\alpha(s_4,x_{v}^n)\\
&\geq \sum_{s_1\in S}\sum_{s_4\in
S}\alpha(x_{-n}^{-u},s_1)\sigma(x_{-u+1}^{-1},1|s_1)f_1(x_0)\sigma(x_{1}^{v-1},s_4|1)\alpha(s_4, x_{v}^n)\\
&\geq
p(x_{-n}^{-u})\big(\min_{s}\sigma(x_{-u+1}^{-1},1|s)\big)f_1(x_0)
\big(\min_{s}\sigma(x_{1}^{v-1},s|1)\big)p(x_{v}^n).\end{align*}
Without loss of generality assume $v_0(x_{-\infty}^{\infty})=1$. Let
$v_{-u}(x_{-\infty}^{\infty})=a$ and
$v_{v}(x_{-\infty}^{\infty})=b$. By Bellman's optimality principle,
for every $i_o\in S$
$$\sigma(x_{-u+1}^{-1},1|a)f_1(x_0)\sigma(x_1^{v-1},b|1)\geq
\sigma(x_{-u+1}^{-1},i_o|a)f_{i_o}(x_0)\sigma(x_1^{v-1},b|i_o),$$
implying that for every state $i_o$,
$$f_1(x_0)\geq {\sigma(x_{-u+1}^{-1},i_o|a)\over \sigma(x_{-u+1}^{-1},1|a)} f_{i_o}(x_0){\sigma(x_1^{v-1},b|i_o)\over \sigma(x_1^{v-1},b|1)}.$$
Thus,
\begin{align}\label{alumine}
\gamma_0(x_{-n}^n,1)\geq
p(x_{-n}^{-u}){\big(\min_{s}\sigma(x_{-u+1}^{-1},1|s)\big)\over
\sigma(x_{-u+1}^{-1},1|a)}\sigma(x_{-u+1}^{-1},i_o|a)f_{i_o}(x_0)\sigma(x_1^{v-1},b|i_o)
{\big(\min_{s}\sigma(x_{1}^{v-1},s|1)\big) \over
\sigma(x_1^{v-1},b|1)}p(x_{v}^n) .\end{align}
Note that for every $x_k^m$,
$$\sum_s\beta(x_k^m,s|i)P(s,j)=\sum_{y_k^m}p(x_k^m,y_k^m,j|i)\leq
|S|^{m-k+1}\sigma(x_k^m,j|i).$$
Therefore, for every $i_o \in S$
\begin{align*}
\gamma_0(x_{-n}^n,i_o)&=\sum_{s_1\in S}\sum_{s_2\in S}\sum_{s_3\in
S}\sum_{s_4\in
S}\alpha(x_{-n}^{-u},s_1)\beta(x_{-u+1}^{-1},s_2|s_1)P(s_2,i_o)f_{i_o}(x_0)
\beta(x_1^{v-1},s_3|i_o)P(s_3,s_4)\alpha(s_4,x_{v}^n)\\
&\leq \sum_{s_1\in S}\sum_{s_4\in
S}\alpha(x_{-n}^{-u},s_1)|S|^{u-1}\sigma(x_{-u+1}^{-1},i_o|s_1)f_{i_o}(x_0)|S|^{v-1}\sigma(x_{1}^{v-1},s_4|i_o)\alpha(s_4,x_{v}^n)\\
&\leq p(x_{-n}^{-u})|S|^{u-1}\big(\max_{s\in
S}\sigma(x_{-u+1}^{-1},i_o|s)\big)f_{i_o}(x_0)|S|^{v-1}\big(\max_{s\in
S}\sigma(x_{1}^{v-1},s|i_o)\big)p(x_{v}^n).
\end{align*}
Let $x_{-n}^n$ be such that $x_{-u}^{-u+r}\in {\cal X}_o^{r+1}$ and
$x_{v-r}^{v}\in {\cal X}_o^{r+1}$. Then $\alpha(x_{-n}^{-u},s_1)=0$
if $s_1\not\in C$, since $x_{-u}\in {\cal X}_o$. Analogously,
$\alpha(s_4,x_{v}^n)=0$ if $s_4\not\in C$. Thus, in this case the
inequality above becomes
\[\label{ülemine}
\gamma_0(x_{-n}^n,i_o)\leq p(x_{-n}^{-u})|S|^{u-1}\big(\max_{s\in
C}\sigma(x_{-u+1}^{-1},i_o|s)\big)f_{i_o}(x_0)|S|^{v-1}\big(\max_{s\in
C}\sigma(x_{1}^{v-1},s|i_o)\big)p(x_{v}^n).\] The same holds for
(\ref{alumine}), implying that
\begin{align*}
{\gamma_0(x_{-n}^n,1)\over \gamma_0(x_{-n}^n,i_o)}\geq & {\min_{s\in
C}\sigma(x_{-u+1}^{-1},1|s)\over
\sigma(x_{-u+1}^{-1},1|a)}{\sigma(x_{-u+1}^{-1},i_o|a) \over
\max_{s\in C}\sigma(x_{-u+1}^{-1},i_o|s)}\times\\
&\times  {\sigma(x_1^{v-1},b|i_0)\over \max_{s\in
C}\sigma(x_{1}^{v-1},s|i_o)}{\min_{s\in C}\sigma(x_{1}^{v-1},s|1)
\over \sigma(x_1^{v-1},b|1)}|S|^{2-(u+v)}.\end{align*}
The inequalities (\ref{ratio1}) and (\ref{ratio2}) imply that the
ratios above are bounded below by ${a\over A}$ that does not depend
on the observations. Thus, there exist constants $c_1$  and
$0<B<\infty$ (not depending on the data) such that for every state
$i_o$,
\begin{equation}\label{kokku}
{p_0(1|x_{-n}^n)\over p_0(i_o|x_{-n}^n)}= {\gamma_0(x_{-n}^n,1)\over
\gamma_0(x_{-n}^n,i_o)}\geq c_1\exp[-B(u+v)].\end{equation} Since
$\sum_{i\in S}p_0(i|x_{-n}^n)=1$, there exists $i_o$ such that
$p_0(i_o|x_{-n}^n)\geq |S|^{-1}$. Thus, by (\ref{kokku}),
$$p_0(1|x_{-n}^n)\geq {c_1\over |S|}\exp[-B(u+v)].$$
Because $p_0(1|x_{-n}^n)\to p_0(1|x_{-\infty}^{\infty})$, the
inequality (\ref{vaikeu}) follows by taking $c={c_1\over |S|}$.
\end{proof}
\subsection*{Proof of Corollary \ref{corrn}}
\begin{proof}
The proof is analogous to the proof of Proposition \ref{propvorr}.
Using the same notations we obtain that for every $t$, $w < t< n$,
\[ \gamma_t(x^n,\tilde{v}_t)\ge
 p(x^{w})\big(\min_{s \in C}\sigma(x_{w+1}^{t-1},\tilde{v}_t|s)\big)f_{\tilde{v}_t}(x_t)\sigma(x_{t+1}^n|\tilde{v}_t).
\]
For every $i_o\in S$,
\[
\gamma_t(x^n,i_o)\le
 p(x^{w})\big(\max_{s \in C}\sigma(x_{w+1}^{t-1},i_o|s)\big)f_{i_o}(x_t)\sigma(x_{t+1}^n|i_o)|S|^{n-w-1}.
\]
Let $v_w(x^n)=b$. By Bellman's optimality principle,
\[
f_{\tilde{v}_t}(x_t)\geq {\sigma(x_{w+1}^{t-1},i_o|b)\over
\sigma(x_{w+1}^{t-1},{\tilde{v}_t}|b)}
f_{i_o}(x_t){\sigma(x_{t+1}^{n}|i_o)\over
\sigma(x_{t+1}^{n}|{\tilde{v}_t})}.
\]
Thus,
\[{p_t({\tilde{v}_t}|x^n)\over p_t(i_o|x^n)}= {\gamma_t(x^n,{\tilde{v}_t})\over
\gamma_t(x^n,i_o)}\ge {\min_{s\in
C}\sigma(x_{w+1}^{t-1},{\tilde{v}_t}|s)\over
\sigma(x_{w+1}^{t-1},{\tilde{v}_t}|b)}{\sigma(x_{w+1}^{t-1},i_o|b)
\over \max_{s\in C}\sigma(x_{w+1}^{t-1},i_o|s)} |S|^{-(n-w-1)}.
\]
Because the ratios above are bounded below by ${a\over A}$ and
$p_t(i_o|x^n)\geq |S|^{-1}$ for some $i_o\in S$, the statement of
the corollary follows with $D=\ln |S|$.
\end{proof}

\bibliographystyle{plain}      

\bibliography{kuljus(dets2010)}

\end{document}